\pdfoutput=1

\RequirePackage{snapshot}
\documentclass[12pt,fabfinal,nofabrule]{fabarticle}

% OT1 pour de belles polices sur arXiv
\usepackage[OT1]{fontenc}

% T1 si on veut des accents mais moche sur arXiv
% \usepackage[T1]{fontenc}

\usepackage[utf8]{inputenc}

\usepackage
[a4paper,tmargin=3truecm,bmargin=3truecm,rmargin=2.5truecm,lmargin=2.5truecm,twoside,verbose=true]{geometry}
\usepackage{amsmath,amssymb,latexsym}

\usepackage{stmaryrd,wasysym,upgreek,mathrsfs,dsfont,mathtools}
\usepackage[english]{babel}
\usepackage{graphicx,color}
\usepackage{slashed,subfig}

\usepackage{array}
\usepackage{multirow}
\usepackage{hhline}

% \usepackage{tikz}
% \usetikzlibrary{arrows,backgrounds,automata}

% \usepackage{pdfdraftcopy}
% \draftstring{DRAFT}
% \draftfontsize{150}
% \definecolor{fabcolor}{rgb}{1,0,1}

% \definecolor{fabcolor}{gray}{.95}
% \draftcolor{fabcolor}

%% gray10 corresponds to {gray}{90}
% \draftcolor{gray10}

%% %% Permet à LaTeX de couper un environnement multi-line (align, multline,
%% %% gather,...) lors d' un saut de page
\allowdisplaybreaks[1]

\usepackage[pdftex]{hyperref}
\hypersetup{%
  colorlinks=true,%
  linkcolor=black,%  
  anchorcolor=black,%
  citecolor=black,%
  urlcolor=black,%
  pdftitle={},%
  pdfauthor={Fabien Vignes-Tourneret},%
  pdfsubject={Research paper},%
  pdfkeywords={Graph polynomial, Bollobas-Riordan, partial duality, multivariate},%
  %pdfproducer={pdflatex},%
  bookmarksopen=false,%
}

% \usepackage{pdfdraftcopy}
% \draftstring{DRAFT}
% \draftfontsize{150}
% % \definecolor{fabcolor}{rgb}{1,0,1}
% \definecolor{fabcolor}{gray}{.92}
% \draftcolor{fabcolor}
% %% gray10 corresponds to {gray}{90}
% % \draftcolor{gray10}

\usepackage[hyperref,thmmarks,amsmath]{fabntheorem}
\usepackage{fabmesthm-en}
\usepackage{fabmacromath}

\usepackage{pdfsync}

% Pour que les numeros d'equation soient section.equation
% Remet automatiquement le compteur d'equation à 0 apres chaque section
\numberwithin{equation}{section}

\usepackage[square,numbers,comma]{natbib}

% % Pour qu'une référence apparaisse éclatée (et pas uniquement son numéro) dans le texte.
\usepackage{bibentry}

\title{\textsf{The multivariate signed \BRp}}
\author{\textrm{Fabien Vignes-Tourneret}}
\date{}

\begin{document}
\nobibliography*
\maketitle

% \vspace*{-1cm}
% \begin{center}
% \textrm{Working group on mathematical physics\\
%   Faculty of physics, university of Vienna\\
%   Boltzmanngasse 5,\\
%   A-1090 Vienna, Austria\\
%   e-mail: \textsf{fabien.vignes-tourneret@univie.ac.at,
%     fabien.vignes@gmail.com}}
%  \end{center}%

% \begin{frontmatter}
  
%   \title{\textsf{The multivariate signed \BRp}}
%   \author{\textrm{Fabien Vignes-Tourneret}}
%   \ead{fabien.vignes-tourneret@univie.ac.at; fabien.vignes@gmail.com}
%   \address{Working group on mathematical physics, Faculty of physics\\University of Vienna, Boltzmanngasse 5\\A-1090 Vienna, Austria\\Office: $+$43 1 42 77 51 507}
%   %\date{}

\begin{abstract}
  We generalise the signed \BRp{} of [\bibentry{Chmutov2007ab}] to a multivariate signed polynomial $Z$ and study its properties. We prove the invariance of $Z$ under the recently defined partial duality of [\bibentry{Chmutov2007aa}] and show that the duality transformation of the multivariate Tutte polynomial is a direct consequence of it.
\end{abstract}
\vspace*{.5cm}
% \begin{keyword}
%   Topological graph theory \sep graph polynomial \sep signed graph, duality.
%   \MSC 05C10 \sep 05C22
% \end{keyword}
% \end{frontmatter}

% \maketitle

\section{Introduction}
\label{sec:introduction}

Ribbon graphs are surfaces with boundary together with a decomposition into a union of closed topological discs of two types, edges and vertices. These sets are subject to some natural axioms recalled in section \ref{sec:ribbon-graphs}. For such a generalisation of the usual graphs, B.~Bollob\'as and O.~Riordan found a topological version of the \Tp{} \cite{Bollobas2001aa,Bollobas2002aa}. In the following, we will refer to this generalisation as the \BRp.\\

S.~Chmutov and I.~Pak introduced a first generalisation of the \BRp{} in \cite{Chmutov2007ab}. It is a three-variable polynomial $R_{s}$ defined on \emph{signed} ribbon graphs. Recall that a graph is said to be signed if to each of its edges an element of $\{+,-\}$ is assigned. Then S.~Chmutov defined \cite{Chmutov2007aa} a new kind of duality with respect to a spanning subgraph\footnote{Considering mainly ribbon graphs, we will write \emph{subgraph} instead of \emph{subribbon graph}. We hope that it will not lead to any misunderstanding.} of a ribbon graph (see section \ref{sec:partial-duality} for a definition). This allows him to prove that the Kauffman bracket of a virtual link diagram $L$ equals the signed \BRp{} of a ribbon graph $G_{L}^{\ks}$, see \eqref{eq:AlternatBRCorresp}. The latter is constructed from a state $\ks$ of $L$.
\begin{align}
  [L](A,B,d)=A^{n(G_{L})}B^{r(G_{L})}d^{k(G_{L})-1}R_{s}(G^{\ks}_{L};\frac{Ad}{B}+1,\frac{Bd}{A},\frac 1d).\label{eq:AlternatBRCorresp}
\end{align}
The new \emph{partial} duality of S.~Chmutov ensures the independence of the right-hand side of \eqref{eq:AlternatBRCorresp} with respect to the state $\ks$.\\

Recall that there exists a natural notion of duality for ribbon graphs. Given such a graph $G$, its dual $G^{\star}$ is built as follows. First glue a disc along each boundary component of $G$. Then remove the interior of each vertex-disc of $G$. The vertex-discs of $G^{\star}$ are the glued discs and its edge-discs are the same as $G$. In the sequel we will refer to this duality as the \emph{natural} duality. The natural duality is a special case of Chmutov's duality. 

In \cite{Chmutov2007aa}, S.~Chmutov also studied the properties of the signed \BRp{} he defined with I.~Pak under the \emph{partial} duality. At the end of this article, he asked whether his work can be generalised to a multivariate polynomial (by multivariate we mean that to each edge corresponds a different variable). It is indeed a natural question to ask. Generally, multivariate generalisation of graph invariant polynomials encode more information than their univariate counterpart. Moreover they are usually easier to handle, see \cite{Ellis-Monaghan2008aa,Ellis-Monaghan2008ab,Sokal2005aa} for review and examples. This article is an answer to Chmutov's question.\\

After briefly reviewing the notions of ribbon graphs and partial duality in section \ref{sec:part-dual-ribb}, the section \ref{sec:generalised-duality} is devoted to the definition and first properties of our multivariate signed \BRp{} $Z$. We derive, there, its behaviour under disjoint union and one-point join as well as its contraction-deletion reduction relations. In section \ref{sec:spann-tree-expans} we give two alternative definitions of the polynomial: namely, a spanning tree and a quasi-tree expansion. The former is very much in the spirit of the spanning tree expansion of Bollob\'as and Riordan in \cite{Bollobas2002aa} and of the one of Kauffman \cite{Kauffman1989aa} for the signed Tutte polynomial. The latter relies on the work of A.~Champanerkar, I.~Kofman and N.~Stoltzfus \cite{Champanerkar2007aa}. Our main theorem, namely the invariance of $Z$ under partial duality, is stated and proved in section \ref{sec:generalised-duality-1}. We can then extend the contraction-deletion relations but only on the surface $xyz^{2}\fide q=1$. Finally we prove that the (natural) duality transformation of the multivariate Tutte polynomial (see proposition \ref{prop:MultiTutteDuality} and \cite{Sokal2005aa}) is a direct consequence of the partial duality transformation of our multivariate signed polynomial. 

\vspace{1cm}
\paragraph{Acknolewdgements}
\label{sec:acknolewdgements}

I am grateful to V.~Rivasseau for having introduced me to the subject. I would also like to warmly thank S.~Chmutov. We had very interesting and fruitful discussions. He also pointed out to me that the change of the sign function (corollary \ref{cor:ChangeSignFct}) can be generalised to a flip of a single edge sign (see proposition \ref{prop:FlipSign}). He also read, carefully, preliminary versions of the manuscript.

I also thank the Hausdorff Institute in Bonn (Germany) during a visit of which this work was initiated.

Finally, I thank the anonymous referees who made interesting suggestions which led to this improved version.

\newpage
\section{Partial duality of a ribbon graph}
\label{sec:part-dual-ribb}

\subsection{Ribbon graphs}
\label{sec:ribbon-graphs}

A ribbon graph $G$ is a (not necessarily orientable) surface with boundary represented as the union of two sets of closed topological discs called vertices $V(G)$ and edges $E(G)$. These sets satisfy the following:
\begin{itemize}
  \item vertices and edges intersect by disjoint line segment,
  \item each such line segment lies on the boundary of precisely one vertex and one edge,
  \item every edge contains exactly two such line segments.
\end{itemize}
Figure \ref{RibbonEx1} shows an example of a ribbon graph. Note that we allow the edges to twist (giving the possibility to the surfaces associated to the ribbon graphs to be non-orientable). A priori, an edge may twist more than once but the \BRp{} only depends on the parity of the number of twists (this is indeed the relevant information to count the boundary components of a ribbon graph) so that we will only consider edges with at most one twist.
\begin{figure}[htb]
  \centering
  \subfloat[A signed ribbon graph]{{\label{RibbonEx1}}\includegraphics[scale=.8]{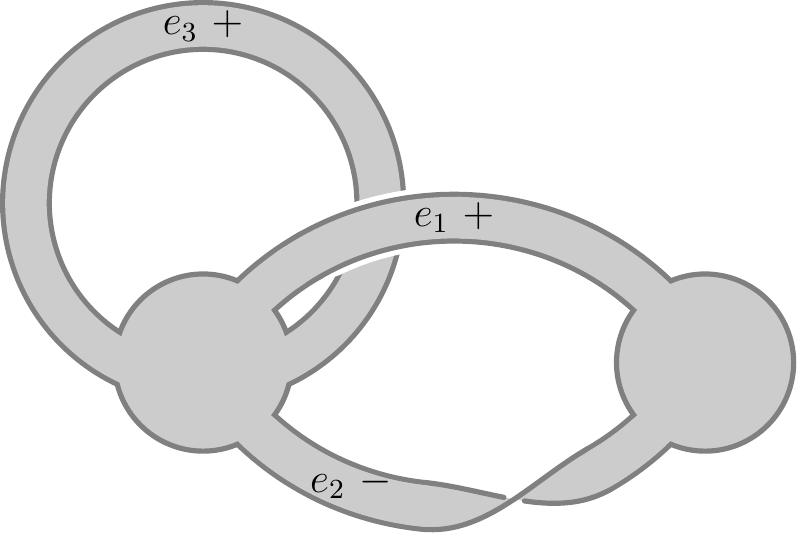}}\hspace{2cm}
  \subfloat[The combinatorial representation]{{\label{CombRep1}}\includegraphics[scale=.8]{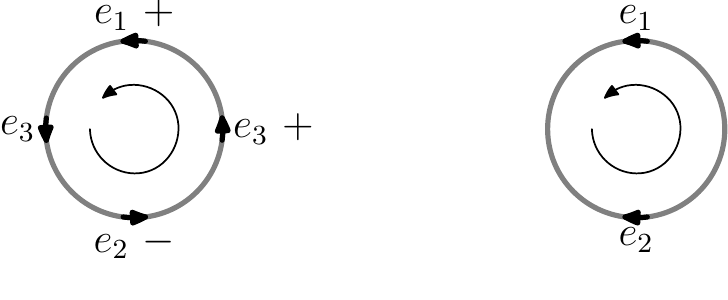}}
  \label{TwoRepEx}
  \caption{Two representations of a ribbon graph}
\end{figure}

A ribbon graph $G$ is said to be \textbf{signed} if an element of $\{+,-\}$ is assigned to each edge. This is achieved via a function $\veps_{G}:E(G)\to\{-1,1\}$.

For the construction of partial dual graphs, another (equivalent) representation of ribbon graphs will be useful. It has been introduced in \cite{Chmutov2007aa} and will be referred hereafter as the ``combinatorial representation''. It can be described as follows: for any ribbon graph $G$, pick up an orientation of each vertex-disc and each edge-disc. The orientation of the edges induces an orientation of the line segments along which they intersect the vertices. Then draw all vertex-discs as disjoint circles in the plane oriented counterclockwise (say), but for the edges, draw only the arrows corresponding to the orientation of the line segments. Figure \ref{CombRep1} gives the combinatorial representation of the graph of figure \ref{RibbonEx1}.

Given a combinatorial representation, one reconstructs the corresponding ribbon graph as follows. Each circle of the representation is filled: this gives the vertex-discs. Let us consider a couple $c_{e}$ of arrows with the same label (i.e.\@ corresponding to the same edge). These two arrows belong to the boundaries of vertices $v_{1}$ and $v_{2}$, which may be equal. One draws an edge which intersects $v_{1}$ and $v_{2}$ along the arrows of $c_{e}$. We now have to decide whether this edge twists or not. This depends on the relative direction of the two arrows. Actually there is a unique choice (twist or not) such that there exists an orientation of the edge which reproduces the couple of arrows under consideration. So we proceed as explained for each couple of arrows with a common label.

\paragraph{Loops}
\label{sec:loops}

Contrary to the graphs, the ribbon graphs may contain four different kinds of loops. A loop may be \textbf{orientable} or not, a \textbf{non-orientable} loop being a twisting edge. Let us consider the general situations of figure \ref{fig:loopRibbon}. The boxes $A$ and $B$ represent any ribbon graph so that the picture \ref{OrLoop} (resp.\@ \ref{NonOrLoop}) describes any ribbon graph $G$ with an orientable (resp.\@ non-orientable) loop $e$ at vertex $v$. A loop is called \textbf{nontrivial} if there is a path in $G$ from $A$ to $B$ which does not contain $v$. If not, the loop is called \textbf{trivial} \cite{Bollobas2002aa}.
\begin{figure}[htb]
  \centering
  \subfloat[An orientable loop]{{\label{OrLoop}}\includegraphics[scale=.8]{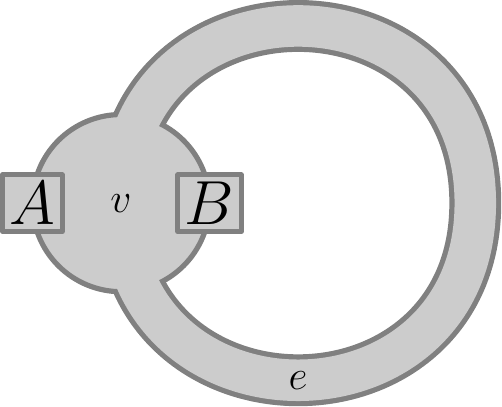}}\hspace{2cm}
  \subfloat[A non-orientable loop]{{\label{NonOrLoop}}\includegraphics[scale=.8]{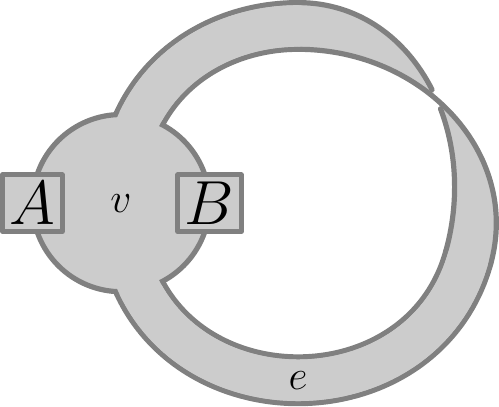}}
  \caption{Loops in ribbon graphs}
  \label{fig:loopRibbon}
\end{figure}

\subsection{Partial duality}
\label{sec:partial-duality}

S.~Chmutov introduced recently (see \cite{Chmutov2007aa}) a new ``generalised duality'' for ribbon graphs which generalises the usual notion of duality. In \cite{Moffatt2008aa}, I.~Moffatt renamed this new duality as ``partial duality''. We adopt this designation here. We now describe the construction of a partial dual graph and give a few properties of the partial duality.\\

Let $G$ be a ribbon graph and $E'\subset E(G)$. Let $F_{E'}$ be the spanning subgraph of $G$ the edge-set of which is $E'$. We will construct the dual $G^{E'}$ of $G$ with respect to the edge-set $E'$, see figure \ref{fig:ribbonEx} for an example. Recall that each edge of $G$ intersects one or two vertex-discs along two line segments. In the following, each time we write ``line segment'', we mean the intersection of an edge and a vertex.

We now construct the combinatorial representation of the partial dual $G^{E'}$ of $G$. We first choose an orientation for each edge of $G$. It induces an orientation of the boundaries of the edges. For each edge in $E(G)-E'$, and as was explained for the combinatorial representation, we draw one arrow per oriented line segment at the boundary of that edge. For the edges in $E'$, we proceed differently. Considering them as rectangles, they have two opposite sides that they share with one or two disc-vertices: these are the line segments, defined above. But they also have two other opposite sides that we call ``long sides''. The chosen orientation induces an orientation of the long sides of the edges in $E'$, see figure \ref{fig:BoundComp} for an example. We draw an arrow on each long side of each edge in $E'$ according to the chosen orientation. Now draw each boundary component of $F_{E'}$ as a circle with arrows corresponding to the edges of $G$. The result is the combinatorial representation of $G^{E'}$, see figure \ref{fig:DualExComb} and \ref{fig:DualEx}. Note that $G$ and $G^{E'}$ are generally embedded into different surfaces (they may have different genera).\\

As in the case of the natural duality, and for any $E'\subset E(G)$, there is a bijection between the edges of $G$ and the edges of its partial dual $G^{E'}$. Let $\phi:E(G)\to E(G^{E'})$ denote this bijection. We explain now how it is defined from the construction of the partial dual graph. As explained above, on each edge $e\in E(G)$, we draw two arrows compatible with an arbitrarily chosen orientation of this edge. If $e\in E'$, these arrows are drawn on the long sides of $e$. If $e\in E(G)\setminus E'$, they belong to the line segments along which $e$ intersects its end-vertices. Anyway, we label this couple of arrows with $\phi(e)$. Proceeding like that for all edges of $G$, we build the combinatorial representation of the dual $G^{E'}$ - namely we get one circle per boundary component of the spanning subgraph $F_{E'}$ of $G$. On each of these circles, there are arrows which represent the edges of $G^{E'}$. For each couple $c_{e'}$ of arrows that is for each edge $e'$ of $G^{E'}$, there exists a unique $e\in E(G)$ such that  $c_{e'}$ bears the label $\phi(e)$. The map $\phi$ is then clearly a bijection.\\

For signed graphs, the partial duality comes with a change of the sign function. The function $\veps_{G^{E'}}$ is defined by the following equations: $\forall e\in E-E',\,\veps_{G^{E'}}(e)=\veps_{G}(e)$ and $\forall e\in E',\,\veps_{G^{E'}}(e)=-\veps_{G}(e)$. For unsigned ribbon graphs and if $E'=E$, the partial duality is the usual duality which exchanges faces (boundary components) and vertices.
\begin{figure}[!htp]
  \centering
  \subfloat[A ribbon graph $G$ with $E'=\{e_{1}\}$]{{\label{fig:ribbonEx}}\includegraphics[scale=.8]{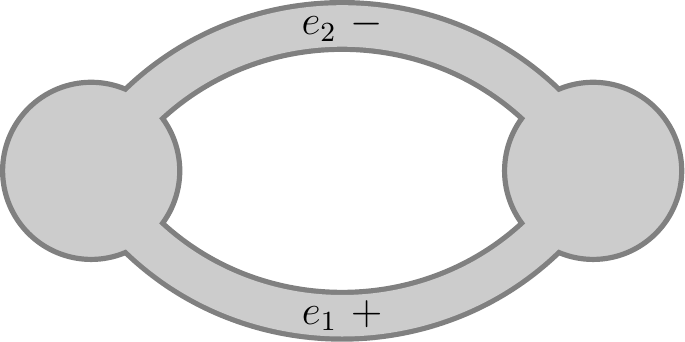}}\hspace{2cm}
  \subfloat[The combinatorial representation of $G$]{{\label{fig:ribbonExReprArrow}}\includegraphics[scale=.8]{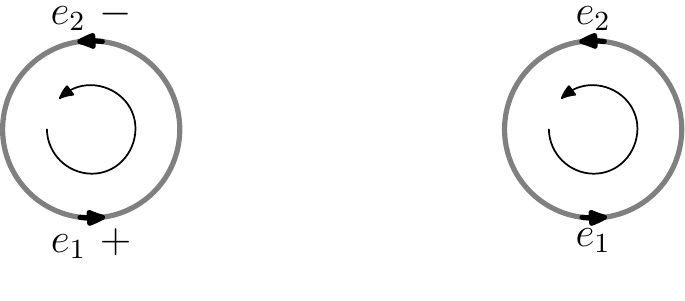}}\\
  \subfloat[The boundary component of $F_{E'}$]{{\label{fig:BoundComp}}\includegraphics[scale=.8]{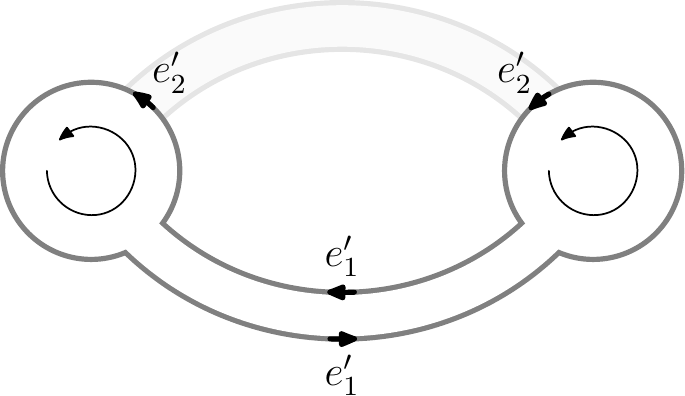}}\hspace{2cm}
  \subfloat[The combinatorial representation of $G^{E'}$]{{\label{fig:DualExComb}}\includegraphics[scale=.8]{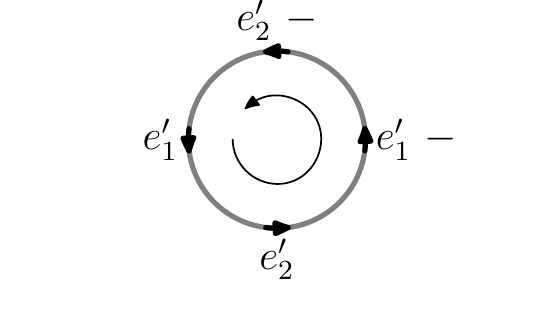}}\\
  \subfloat[The dual $G^{E'}$]{{\label{fig:DualEx}}\includegraphics[scale=.8]{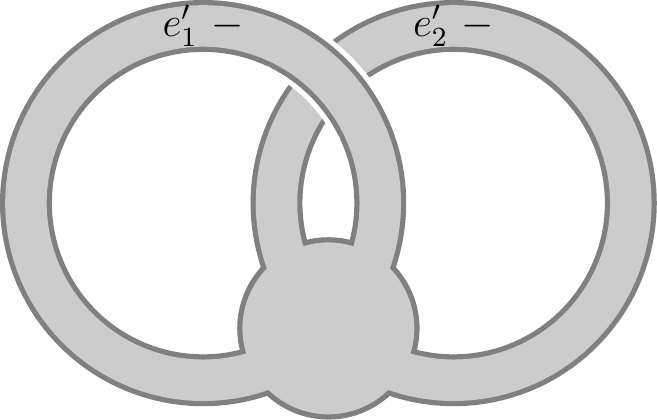}}
  \caption{Construction of a partial dual}
  \label{PartDualEx}
\end{figure}

S.~Chmutov proved among other things the following basic properties of his partial duality:
\begin{lemma}[\cite{Chmutov2007aa}]
  \label{lem:SimpleProp}
  For any ribbon graph $G$ and any subset of edges $E'\subset E(G)$, we have
  \begin{itemize}
  \item let $e\notin E'$, then $G^{E'\cup\{e\}}=(G^{E'})^{\{e\}}$,
  \item $(G^{E'})^{E'}=G$ and
  \item the partial duality preserves the number of connected components.
  \end{itemize}
\end{lemma}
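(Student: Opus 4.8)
The plan is to argue entirely at the level of the combinatorial representation, since a ribbon graph is reconstructed from it uniquely. Recall that the combinatorial representation of $G^{E'}$ is pinned down by two data: its system of vertex-circles, which are the boundary components of the spanning subgraph $F_{E'}$, and the couple of arrows carried by each edge, drawn on the long sides if the edge belongs to the dualised set and on the line segments otherwise, read in the cyclic order in which a boundary component meets them. Since the bijection $\phi$ identifies the edge-sets of all the graphs in play, to establish an equality $G^{A}=G^{B}$ it suffices to check that the two decorated circle-systems coincide: the same circles, carrying the same labelled arrows in the same cyclic order (and, in the signed case, the same values of $\veps$).

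For the \emph{first property}, the dualised edge-set is $E'\cup\{e\}$ on both sides, so the placement rule assigns, edge by edge, exactly the same arrows in the two representations (long sides for $E'\cup\{e\}$, line segments for the rest); only the circle-systems must be compared. In $G^{E'\cup\{e\}}$ they are the boundary components of $F_{E'\cup\{e\}}$. In $(G^{E'})^{\{e\}}$ one first forms $G^{E'}$, in which $e$ is still an undualised edge sitting on its line segments, and then reads off the boundary components of the one-edge spanning subgraph of $G^{E'}$ supported on $e$. The heart of the matter is a local computation showing that routing the boundary components of $F_{E'}$ along the two long sides of $e$ reproduces precisely the boundary components of $F_{E'\cup\{e\}}$, with matching arrows; this is the step I expect to be the most delicate, as it requires separating the orientable from the twisted case and tracking whether adjoining $e$ merges two boundary components or splits one into two. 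Once this identification is in hand, the two combinatorial representations agree and the first property follows.

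The \emph{second property} I would reduce to natural duality. In $G^{E'}$ the edges of $E'$ now sit on their long sides, and forming $(G^{E'})^{E'}$ re-dualises exactly these edges. The circles of the result are the boundary components of the spanning subgraph of $G^{E'}$ supported on $E'$; this subgraph has the boundary components of $F_{E'}$ as vertices and $E'$ as edge-set, and, because the edges of $E(G)-E'$ influence neither the boundary of $F_{E'}$ nor the cyclic placement of the $E'$-arrows on it, one recognises it as the natural dual $(F_{E'})^{\star}$ of $F_{E'}$ (the full partial dual, in which every edge of $F_{E'}$ is dualised). Dualising it again exchanges its boundary components back with its vertices, and the boundary components of $(F_{E'})^{\star}$ are the vertices of $F_{E'}$, that is of $G$; at the same time every edge of $E'$ returns to its line segments and its sign, flipped twice, is restored. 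The decorated circle-system obtained is thus that of $G$.

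For the \emph{third property}, the first property lets me peel the edges of $E'$ one at a time, writing $G^{E'}$ as a succession of single-edge duals, so it suffices to show that each $G^{\{e\}}$ has the same number $k$ of connected components as $G$. Passing to $F_{\{e\}}$ leaves every vertex not meeting $e$ as its own boundary circle and recomputes only the boundary of the local piece carrying $e$; thus $G^{\{e\}}$ keeps the whole edge-set $E(G)$ and every incidence away from $e$, and differs from $G$ only near $e$. Concretely, on the component of $G$ containing $e$ the operation acts as a natural duality of that piece, which preserves connectedness, while it is the identity on the other components; checking directly that two vertices are joined by a path in $G$ if and only if their images are joined by a path in $G^{\{e\}}$ then yields $k(G^{\{e\}})=k(G)$, and iterating over the edges of $E'$ gives the claim for $G^{E'}$.
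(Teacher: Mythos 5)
The paper itself gives no proof of Lemma \ref{lem:SimpleProp}: it is imported from Chmutov's paper, so your attempt can only be measured against the construction of $G^{E'}$ in section \ref{sec:partial-duality}. Your overall strategy --- compare decorated circle systems in the combinatorial representation and reduce everything to single-edge duals --- is the right one, and your treatment of the second property is essentially sound. For the first property your reduction is also correct (the arrow placements agree edge by edge, so only the circle systems matter), but you explicitly defer what you call ``the heart of the matter''. That deferred step is in fact where the whole content of the statement lives, and leaving it as ``the delicate local computation I expect to have to do'' leaves the property unproven as written. It is, however, easy to close: the circles of $(G^{E'})^{\{e\}}$ are the boundary of the subsurface obtained by capping each component of $\partial F_{E'}$ with a disc and then attaching the band $e$ along its two line segments, and capping a boundary circle with a disc does not change how attaching a band along two arcs of that circle reroutes the boundary; hence both circle systems are literally $\partial F_{E'\cup\{e\}}$, with no case split on twisting or on merge-versus-split needed.

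The genuine error is in the third property. The claim that on the component containing $e$ the operation ``acts as a natural duality of that piece'' is false: $G^{\{e\}}$ dualises the single edge $e$, not every edge of that component, so you cannot invoke preservation of connectedness by natural duality. Moreover ``their images'' is not well defined, since a vertex of $G$ need not correspond to a single vertex of $G^{\{e\}}$: an orientable loop at $v$ splits $v$ into two vertices, while a non-loop edge merges its endpoints into one. What is needed is the short case analysis on $F_{\{e\}}$: if $e$ joins distinct vertices $v_{1},v_{2}$, the piece $v_{1}\cup e\cup v_{2}$ has one boundary circle (twisted or not), the two vertices merge, $\phi(e)$ becomes a loop, and every other edge keeps its incidences; if $e$ is a non-orientable loop the local piece is a M\"obius band with one boundary circle and nothing changes at the level of components; if $e$ is an orientable loop the local piece is an annulus, $v$ splits into two vertices, but each remaining edge at $v$ lands on one of the two new circles and the two new vertices are joined by $\phi(e)$, so no component is disconnected. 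Since all other components are untouched, $k(G^{\{e\}})=k(G)$, and iterating via the first property gives the claim for general $E'$.
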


The partial duality allows an interesting and fruitful definition of the contraction of an edge:
\begin{defnb}[Contraction of an edge \cite{Chmutov2007aa}]\label{def:Contraction}
  Let $G$ be a ribbon graph and $e\in E(G)$ any of its edges. We define the contraction of $e$ by:
  \begin{align}
    G/e\defi&G^{\{e\}}-e.\label{eq:ContractionDef}
  \end{align}
\end{defnb}
From the definition of the partial duality, one easily checks that, for an edge incident with two different vertices, the definition \ref{def:Contraction} coincides with the usual intuitive contraction of an edge. The contraction of a loop depends on its orientability, see figures \ref{fig:OrLoopContraction} and \ref{fig:NonOrLoopContraction}.
\begin{figure}[!htp]
  \centering
  \begin{minipage}[c]{.4\linewidth}
    \centering
    \includegraphics[scale=.8]{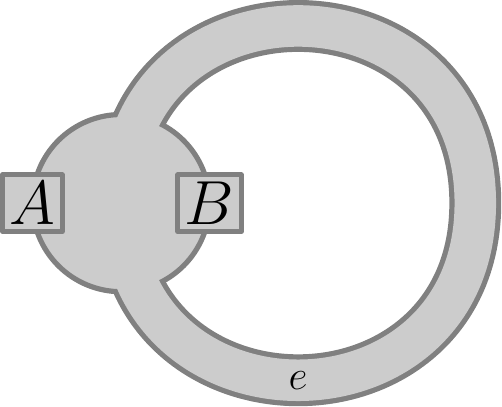}\\
    A ribbon graph $G$ with an orientable loop $e$
  \end{minipage}%
  \qquad$\longrightarrow$\qquad%
  \begin{minipage}[c]{.4\linewidth}
    \centering
    \includegraphics[scale=.8]{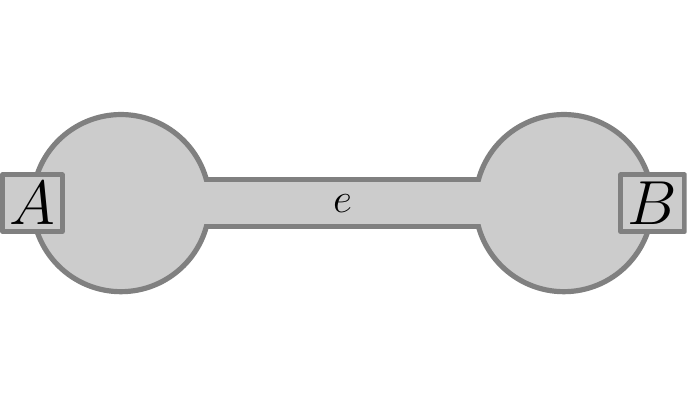}\\
    $G^{\{e\}}$\\
    \phantom{orientable loop $e$}
  \end{minipage}\\%
  $\longrightarrow$\qquad%
  \begin{minipage}[c]{.4\linewidth}
    \centering
    \includegraphics[scale=.8]{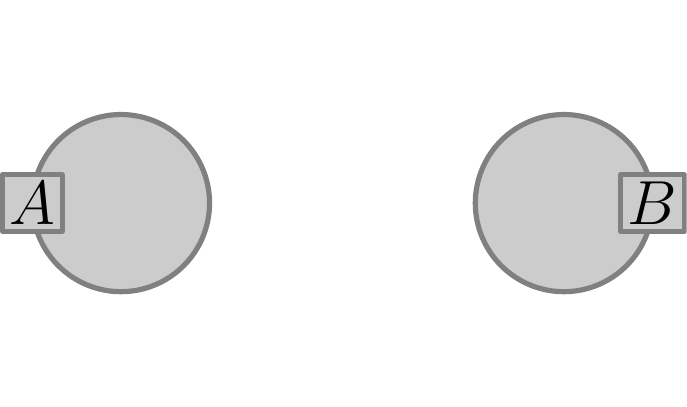}\\
    $G/e=G^{\{e\}}-e$
  \end{minipage}
  \caption{Contraction of an orientable loop}
  \label{fig:OrLoopContraction}
\end{figure}
\begin{figure}[!htp]
  \centering
  \begin{minipage}[c]{.4\linewidth}
    \centering
    \includegraphics[scale=.8]{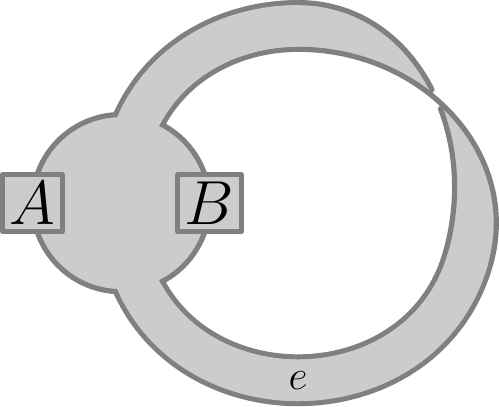}\\
    A ribbon graph $G$ with a non-orientable loop $e$
  \end{minipage}%
  \qquad$\longrightarrow$\qquad%
  \begin{minipage}[c]{.4\linewidth}
    \centering
    \includegraphics[scale=.8]{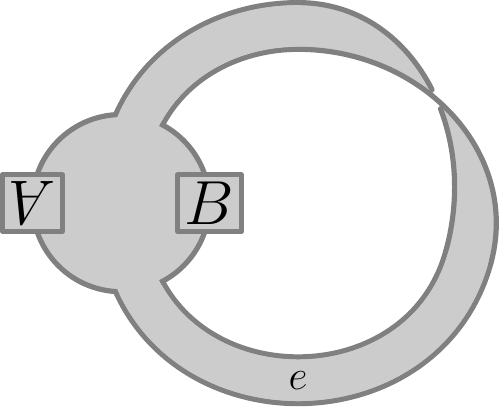}\\
    $G^{\{e\}}$\\
    \phantom{non-orientable loop $e$}
  \end{minipage}\\%
  $\longrightarrow$\qquad%
  \begin{minipage}[c]{.4\linewidth}
    \centering
    \includegraphics[scale=.8]{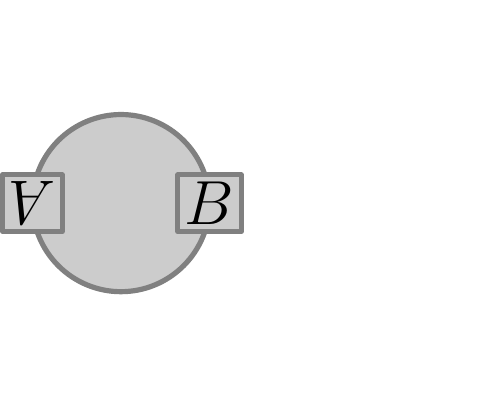}\\
    $G/e=G^{\{e\}}-e$
  \end{minipage}
  \caption{Contraction of a non-orientable loop}
  \label{fig:NonOrLoopContraction}
\end{figure}

Different definitions of the contraction of a loop have been used in the litterature. One can define $G/e\defi G-e$. In \cite{Huggett2007aa}, S.~Huggett and I.~Moffatt give a definition which leads to surfaces which are not ribbon graphs anymore. The definition \ref{def:Contraction} maintains the duality between contraction and deletion, and, as will be shown in section \ref{sec:generalised-duality-1}, it allows one to get reduction relations for nontrivial loops.

\section{Multivariate signed polynomial}
\label{sec:generalised-duality}

In this section, we define the multivariate version of the signed \BRp{} introduced in \cite{Chmutov2007ab}. We derive its behaviour under disjoint union and one-point join, and prove its contraction-deletion relations.

\subsection{Definition}
\label{sec:definition}

Let $G$ be a signed ribbon graph. Let us define $E(G)\fide E_{+}\cup E_{-}$ with $E_{\pm}$ being the set of positive (resp.\@ negative) edges of $G$. We write $e_{\pm}$ for the corresponding cardinalities. For any spanning subgraph $F=(V(G),E(F))$ of $G$, let $\Fb$ be the spanning subgraph of $G$ with edge-set $E(G)-E(F)$ and $s(F)\defi \frac 12(e_{-}(F)-e_{-}(\Fb))$.

For the rest of this article we use the following notations:
\begin{itemize}
\item $v(G)=\card V(G)$ is the number of vertices of $G$,
\item $e(G)=\card E(G)$ is the number of edges of $G$,
\item $k(G)$ its number of components,
\item $r(G)=v(G)-k(G)$ its rank,
\item $n(G)=e(G)-r(G)$ its nullity and
\item $f(G)$ its number of boundary components (faces).
\end{itemize}

Let $R_{s}(G;x+1,y,z)$ be the signed \BRp{} for ribbon graphs introduced in \cite{Chmutov2007ab}:
\begin{align}
  R_{s}(G;x+1,y,z)=&\sum_{F\subseteq G}x^{r(G)-r(F)+s(F)}y^{n(F)-s(F)}z^{k(F)-f(F)+n(F)}\label{eq:BRPolyDef}\\
  \fide&\,x^{-k(G)}(yz)^{-v(G)}Z(G;xyz^{2},yz,z),\\
  Z(G;xyz^{2},yz,z)=&\sum_{F\subseteq G}(xyz^{2})^{k(F)}(yz)^{e(F)}z^{-f(F)}x^{s(F)}y^{-s(F)}.
  \intertext{We define new variables $q\defi xyz^{2}$, $\alpha\defi yz$, $c\defi z^{-1}$ and get:}
  Z(G;q,\alpha,c)=&\sum_{F\subseteq G}q^{k(F)+s(F)}\alpha^{e(F)-2s(F)}c^{f(F)}.
\end{align}
The generalisation to the multivariate case is then obvious.
\begin{defn}
  Let $G$ be any signed ribbon graph, possibly with loops and multiple edges. Let $q,z\in\C$ and for all $e\in E(G)$, let $\alpha_{e}\in\C$. Let also $\balpha$ denote the set $\{\alpha_{e}\}_{e\in E(G)}$. We define the multivariate signed \BRp{} as follows:
  \begin{align}
    Z(G;q,\mathbf{\alpha},c)\defi&\sum_{F\subseteq G}q^{k(F)+s(F)}\Big(\prod_{\substack{e\in E_{+}(F)\\\cup E_{-}(\bar{F})}}\alpha_{e}\Big)c^{f(F)}.\label{eq:SMBRDef}
  \end{align}
\end{defn}
The multivariate polynomial $Z$ is clearly a multivariate generalisation of $R_{s}$. Indeed if for any $e\in E(G),\,\alpha_{e}=yz$ and if we let $\mathbf{yz}$ be the corresponding set, we have
\begin{align}
  R_{s}(G;x+1,y,z)=&x^{-k(G)}(yz)^{-v(G)}Z(G;xyz^{2},\mathbf{yz},z^{-1}).\label{eq:GenerRs}
\end{align}
As a consequence, it is a generalisation of the \BRp{} as $R_{s}$ reduces to the latter if all the edges of $G$ are positive.

Whereas the polynomial $Z$ appears naturally if one looks for a multivariate generalisation of the signed \BRp{}, it can also be expressed in terms of the \emph{unsigned} multivariate \BRp{}\footnote{We thank our anonymous referee for having pointed this out to us.} introduced in \cite{Moffatt2008ab}. Actually there is no real difference between signed and unsigned polynomials at the multivariate level.\\

\noindent
Recall that the multivariate \BRp{} is defined as follows \cite{Moffatt2008ab}:
\begin{align}
  \hat{Z}(G;q,\mathbf{\beta},c)\defi&\sum_{F\subseteq G}q^{k(F)}\Big(\prod_{e\in E(F)}\beta_{e}\Big)c^{f(F)}.\label{eq:MBRDef}
\end{align}
Considering, now, a signed ribbon graph $G$, we can take advantage of the natural partition of the set of edges into positive and negative ones to recover the signed \BRp{}. To this end, we have to choose particular weights in accordance with the partition. With the following choice,
\begin{align}
  \beta_{e}=&%
  \begin{cases}
    \alpha_{e}&\text{if $e$ is positive,}\\
    q\alpha_{e}^{-1}&\text{if $e$ is negative,}
  \end{cases}\label{eq:weightChoice}\\
  \intertext{the signed polynomial $Z$ is given by}
  Z(G;q,\balpha,c)=&\Big(\prod_{e\in E_{-}(G)}q^{-1/2}\alpha_{e}\Big)\hat{Z}(G;q,\mathbf{\beta},c).\label{eq:ZZhat}
\end{align}
The proof of \eqref{eq:ZZhat} relies on the following equalities: $E_{-}(\Fb)=E_{-}(G)\setminus E_{-}(F)$ and\\
$s(F)=e_{-}(F)-\frac 12 e_{-}(G)$.

Despite the equality \eqref{eq:ZZhat} we decide to use the polynomial $Z$ instead of $\hat{Z}$. The former arises indeed naturally from the signed \BRp{} which has an interesting behaviour with respect to the partial duality. Moreover, the sign dependence is more explicit in $Z$. It is true that some of the proofs we give in the rest of this article may be made shorter by using $\hat{Z}$ instead, but we think that it is interesting to demonstrate the role of the signed character of the polynomial.\\

The multivariate signed polynomial (\ref{eq:SMBRDef}) is also a signed generalisation of the multivariate dichromatic polynomial. Recall that this is defined by
\begin{align}
  Z_{T}(G;q,\balpha)\defi&\sum_{F\subseteq G}q^{k(F)}\prod_{e\in E(F)}\alpha_{e}.\label{eq:MultiTutte}
\end{align}
We will use this fact to prove that the duality relation for this multivariate Tutte polynomial is a consequence of the duality relation for $Z$.

In \cite{Chmutov2007ab}, S.~Chmutov and I.~Pak noted that $R_{s}$ is a generalisation of the signed Tutte polynomial defined by Kauffman in \cite{Kauffman1989aa}. As a consequence, $Z$ is also a generalisation of the Kauffman's polynomial $Q$. Indeed the latter can be expressed as an evaluation of $Z$: for any $e\in E(G)$, let $\alpha_{e}=Ad$ and let us write $\mathbf{Ad}$ for the corresponding set $\{\alpha_{e}\}_{e\in E(G)}$. Then we have:
\begin{align}
  Q[G](A,1,d)=&d^{-v(G)-1-k(G)}A^{k(G)}Z(G;d^{2},\mathbf{Ad},1).\label{eq:GenerKauff}
\end{align}

\subsection{Simple properties}
\label{sec:simple-properties}

\begin{prop}[Disjoint union, one-point join]
  Let $G_{1}\cup G_{2}$ be the disjoint union of $G_{1}$ and $G_{2}$. Then
  \begin{subequations}
    \begin{align}
      Z(G_{1}\cup G_{2};q,\mathbf{\alpha},c)=&Z(G_{1};q,\mathbf{\alpha_{1}},c)Z(G_{2};q,\mathbf{\alpha_{2}},c)\label{eq:DisjUnion}
    \end{align}
    where $\mathbf{\alpha}=\mathbf{\alpha_{1}}\cup\mathbf{\alpha_{2}}$.\\
    
    Let $G_{1}\cdot G_{2}$ be the one-point join of $G_{1}$ and $G_{2}$. Then
    \begin{align}
      Z(G_{1}\cdot G_{2};q,\mathbf{\alpha},c)=&\frac{1}{qc}Z(G_{1};q,\mathbf{\alpha_{1}},c)Z(G_{2};q,\mathbf{\alpha_{2}},c).\label{eq:1PtJoin}
    \end{align}
  \end{subequations}
\end{prop}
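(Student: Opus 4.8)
The plan is to establish both identities by exhibiting a bijection between spanning subgraphs of the composite graph and pairs of spanning subgraphs of the factors, and then checking that each of the five quantities appearing in the exponents of \eqref{eq:SMBRDef} — namely $k$, $s$, the edge-product, and $f$ — behaves additively (or almost additively) under the composition. First I would recall that a spanning subgraph $F$ of $G_{1}\cup G_{2}$ is uniquely determined by the pair $(F_{1},F_{2})$ with $F_{i}=F\cap G_{i}$, and likewise for the one-point join, since the join identifies a single vertex of $G_{1}$ with a single vertex of $G_{2}$ without adding any edge, so that the edge-sets still split disjointly as $E(F)=E(F_{1})\sqcup E(F_{2})$.

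For the disjoint union \eqref{eq:DisjUnion} I would then verify termwise additivity. Since the two pieces share nothing, we have $k(F)=k(F_{1})+k(F_{2})$, $f(F)=f(F_{1})+f(F_{2})$, and the edge-signature splits so that $s(F)=s(F_{1})+s(F_{2})$ (the positive-edges-of-$F$ and negative-edges-of-$\Fb$ counts simply partition according to which factor each edge lives in). The edge-product factorises as $\prod_{e\in E_{+}(F)\cup E_{-}(\Fb)}\alpha_{e}=\big(\prod_{E_{+}(F_{1})\cup E_{-}(\Fb_{1})}\alpha_{e}\big)\big(\prod_{E_{+}(F_{2})\cup E_{-}(\Fb_{2})}\alpha_{e}\big)$. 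Substituting these into the summand and using the distributivity of the double sum over the independent choices of $F_{1}$ and $F_{2}$ gives the product formula directly.

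For the one-point join \eqref{eq:1PtJoin} the only change is that gluing at a single shared vertex merges one component and one boundary component. Concretely, because the identified vertex lies in exactly one component of each $F_{i}$ and on exactly one boundary trace of each, I expect $k(F)=k(F_{1})+k(F_{2})-1$ and $f(F)=f(F_{1})+f(F_{2})-1$, while $s(F)=s(F_{1})+s(F_{2})$ and the edge-product still factorises exactly as before (the join adds no edge). Feeding these into \eqref{eq:SMBRDef} produces an overall factor $q^{-1}c^{-1}=(qc)^{-1}$ relative to the product of the two factor polynomials, which is precisely the claimed prefactor. The main obstacle, and the step deserving genuine care rather than routine bookkeeping, is the boundary-component count $f(F)=f(F_{1})+f(F_{2})-1$ for the join: unlike $k$, which is a purely combinatorial connectivity statement, $f$ is a topological quantity depending on how the ribbon structure traces boundary curves, so I would argue it by examining the local picture at the glued vertex — two boundary arcs meeting at the join point fuse into one — to confirm that exactly one boundary component is lost regardless of the twists and genera of the two factors.
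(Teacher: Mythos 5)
Your proposal is correct and follows essentially the same route as the paper: decompose each spanning subgraph of the composite as a pair $(F_{1},F_{2})$, use additivity of $k$, $s$, $f$ and the factorisation of the edge-product for the disjoint union, and for the one-point join the relations $k(F_{1}\cdot F_{2})=k(F_{1})+k(F_{2})-1$ and $f(F_{1}\cdot F_{2})=f(F_{1})+f(F_{2})-1$. Your extra attention to the boundary-component count at the glued vertex is a sensible refinement of a step the paper simply asserts.
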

The proof follows essentially \cite{Bollobas2002aa}. $G$ being the disjoint union of $G_{1}$ and $G_{2}$, any of its spanning subgraphs $F$ is the disjoint union of a subgraph $F_{1}$ of $G_{1}$ and a subgraph $F_{2}$ of $G_{2}$. The parameters $k,s$ and $f$ are additive under the disjoint union and $E_{\pm}(F_{1}\cup F_{2})=E_{\pm}(F_{1})\cup E_{\pm}(F_{2})$.

If $G$ is the one-point join of $G_{1}$ and $G_{2}$ then for any of its subgraphs $F$, there exists subgraphs $F_{1}$ of $G_{1}$ and $F_{2}$ of $G_{2}$ such that $F=F_{1}\cdot F_{2}$. To prove (\ref{eq:1PtJoin}), we just have to remark that $k(F_{1}\cdot F_{2})=k(F_{1})+k(F_{2})-1$ and $f(F_{1}\cdot F_{2})=f(F_{1})+f(F_{2})-1$, the function $s$ being additive.

\begin{rem}
  If one defines $\widetilde{Z}(G;q,\mathbf{\alpha},c)\defi q^{-k(G)}c^{-f(G)}Z(G;q,\mathbf{\alpha},c)$ then
  \begin{align}
    \widetilde{Z}(G_{1}\cup G_{2};q,\mathbf{\alpha},c)=&\widetilde{Z}(G_{1}\cdot G_{2};q,\mathbf{\alpha},c)=\widetilde{Z}(G_{1};q,\mathbf{\alpha_{1}},c)\widetilde{Z}(G_{2};q,\mathbf{\alpha_{2}},c).
  \end{align}
\end{rem}

\subsection{Contractions and deletions}
\label{sec:contr-delet}

\begin{prop}[Deletion and contraction]\label{prop:DelCont}
  Let $G$ be any signed ribbon graph and for any edge $e\in E(G)$, let $\balpha_{e}\defi\balpha\setminus\{\alpha_{e}\}$. Then for every positive edge $e$ of $G$ which is not an orientable loop,
  \begin{subequations}
    \begin{align}
      Z(G;q,\balpha,c)=&\alpha_{e}Z(G/e;q,\balpha_{e},c)+Z(G- e;q,\balpha_{e},c).\label{eq:DelContOrdBridNOP}
      \intertext{For every positive orientable \emph{trivial} loop $e$,}
      Z(G;q,\balpha,c)=&q^{-1}\alpha_{e}Z(G/e;q,\balpha_{e},c)+Z(G- e;q,\balpha_{e},c)\label{eq:DelContOTrivP}\\
      =&(\alpha_{e}c+1)Z(G-e;q,\balpha_{e},c).\label{eq:DelOTrivP}
      \intertext{For every negative edge $e$ of $G$ which is not an orientable loop,}
      Z(G;q,\balpha,c)=&q^{1/2}Z(G/e;q,\balpha_{e},c)+q^{-1/2}\alpha_{e}Z(G-e;q,\balpha_{e},c).\label{eq:DelContOrdBridNON}
      \intertext{For every negative orientable \emph{trivial} loop $e$,}
      Z(G;q,\balpha,c)=&q^{-1/2}\big(Z(G/e;q,\balpha_{e},z)+\alpha_{e}Z(G-e;q,\balpha_{e},z)\big)\label{eq:DelContOTrivN}\\
      =&(q^{1/2}c+q^{-1/2}\alpha_{e})Z(G-e;q,\balpha_{e},c).\label{eq:DelOTrivN}
    \end{align}
  \end{subequations}
\end{prop}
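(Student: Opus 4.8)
The plan is to prove every identity by splitting the defining sum \eqref{eq:SMBRDef} according to whether the distinguished edge $e$ belongs to the spanning subgraph $F$ or not, and then to use the two standard bijections $\{F\subseteq G:e\notin F\}\to\{F'\subseteq G-e\}$ (equality of edge sets) and $\{F\subseteq G:e\in F\}\to\{F''\subseteq G/e\}$, $F\mapsto F/e=F^{\{e\}}-e$, via definition \ref{def:Contraction}. For each bijection I would track how the three pieces of data carried by a summand transform: the exponent $k(F)+s(F)$ of $q$, the monomial $\prod_{e'\in E_+(F)\cup E_-(\Fb)}\alpha_{e'}$, and the exponent $f(F)$ of $c$. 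The combinatorial bookkeeping ($s$ and the monomial) is uniform and easy; the topological bookkeeping ($k$ and $f$) is where the edge type enters, and is the heart of the proof.

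First the easy half. Since partial duality flips only the sign of $e$ and we then delete $e$, the sign function of both $G-e$ and $G/e$ agrees with that of $G$ on all remaining edges; counting negative edges then gives, writing $\chi_-$ for the indicator that $e$ is negative, $s_G(F)=s_{G-e}(F)-\tfrac12\chi_-$ when $e\notin F$ and $s_G(F)=s_{G/e}(F/e)+\tfrac12\chi_-$ when $e\in F$. Likewise, inspecting which of $E_+(F)$ and $E_-(\Fb)$ the edge $e$ can enter shows that the monomial acquires exactly one factor $\alpha_e$ precisely when ($e\notin F$ and $e$ negative) or ($e\in F$ and $e$ positive), and no factor otherwise. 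Finally, for $e\notin F$ the ribbon graph $F$ is literally the corresponding subgraph of $G-e$, so $k$ and $f$ are unchanged; assembling these facts gives the deletion contribution $Z(G-e;q,\balpha_e,c)$ for $e$ positive and $q^{-1/2}\alpha_e Z(G-e;q,\balpha_e,c)$ for $e$ negative, matching the second terms of \eqref{eq:DelContOrdBridNOP} and \eqref{eq:DelContOrdBridNON}.

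The crux is the contraction contribution, i.e.\ comparing $k(F),f(F)$ with $k(F/e),f(F/e)$ for $e\in F$. Here I would use the identity $v(H^{A})=f(F_A)$ relating the vertices of a partial dual to the boundary components of the spanning subgraph on $A$, applied to $H=F$ and $A=\{e\}$: the spanning subgraph $F_{\{e\}}$ is an isolated-vertex forest together with the single-edge component carrying $e$, which is a disc, an annulus, or a Möbius band according as $e$ is a non-loop, an orientable loop, or a non-orientable loop. Reading off its boundary components yields $v(F^{\{e\}})=v(F)-1,\;v(F)+1,\;v(F)$ respectively, so after deleting $e$ the Euler-characteristic and component counts give, in the first and third cases, $k(F)=k(F/e)$ and $f(F)=f(F/e)$ (this is why an ordinary edge and a non-orientable loop obey the \emph{same} relation), and in the orientable-loop case $k(F/e)=k(F)+1$ with $f(F)=f(F/e)$, the annulus splitting $v$ into two discs. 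Feeding these together with the $s$- and monomial-data into the sum produces the contraction terms $\alpha_e Z(G/e)$, $q^{1/2}Z(G/e)$, $q^{-1}\alpha_e Z(G/e)$ and $q^{-1/2}Z(G/e)$ of \eqref{eq:DelContOrdBridNOP}, \eqref{eq:DelContOrdBridNON}, \eqref{eq:DelContOTrivP} and \eqref{eq:DelContOTrivN}; note that triviality of the loop in $G$ forces triviality in every $F\subseteq G$ containing it, so the orientable-loop relation is available for all such $F$.

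It then remains to deduce the closed forms \eqref{eq:DelOTrivP} and \eqref{eq:DelOTrivN}. For a \emph{trivial} orientable loop the annulus cuts off a free disc, so $G/e$ is the disjoint union of $G-e$ with one isolated vertex $v_0$; since a single isolated vertex has $Z=qc$ (its only subgraph has $k=f=1$ and $s=0$), the disjoint-union rule \eqref{eq:DisjUnion} gives $Z(G/e)=qc\,Z(G-e)$, and substituting this into the first forms collapses them to $(\alpha_e c+1)Z(G-e)$ and $(q^{1/2}c+q^{-1/2}\alpha_e)Z(G-e)$. I expect the main obstacle to be the topological step of the previous paragraph: verifying rigorously, rather than on the representative local pictures of figures \ref{fig:OrLoopContraction} and \ref{fig:NonOrLoopContraction}, that the boundary-component count $f$ is preserved in all three cases — in particular that contracting a non-orientable loop, which may transfer a twist to incident edges, leaves $f$ unchanged and so behaves exactly like an ordinary edge, and that the orientable case genuinely requires triviality, since a nontrivial orientable loop keeps $k$ fixed while lowering $f$ and is deliberately excluded here.
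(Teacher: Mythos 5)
Your proof is correct and follows essentially the same route as the paper's: split the defining sum according to whether $e\in E(F)$, and compare $k$, $f$, $s$ and the $\alpha$-monomial between $F$ and the corresponding subgraph of $G/e$ or $G-e$. The only cosmetic differences are that you justify the $k$- and $f$-relations via the boundary-component count of the one-edge spanning subgraph (where the paper simply asserts them, cf.\ table \ref{tab:SignDepProp}) and that you obtain \eqref{eq:DelOTrivP} and \eqref{eq:DelOTrivN} from the disjoint-union rule via $Z(G/e;q,\balpha_{e},c)=qc\,Z(G-e;q,\balpha_{e},c)$ rather than from the paper's direct comparison $k(F)=k(F-e)$, $f(F)=f(F-e)+1$.
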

\begin{proof}
  Let $e\in E(G)$ be either an ordinary edge, a bridge or a \emph{non-orientable} loop. We have
  \begin{align}
    Z(G;q,\mathbf{\alpha},c)=&\sum_{F\subseteq G}q^{k(F)+s(F)}\Big(\prod_{e'\in E_{+}(F)}\alpha_{e'}\prod_{e'\in E_{-}(\bar{F})}\alpha_{e'}\Big)c^{f(F)}\fide\sum_{F\subseteq G}M(F,\balpha)\\
    =&\sum_{\substack{F\subseteq G\tq\\e\in E(F)}}M(F,\balpha)+\sum_{\substack{F\subseteq G\tq\\e\notin E(F)}}M(F,\balpha)
  \end{align}
  The subgraphs of $G$ which contain (resp.\@ do not contain) $e$ are in one-to-one correspondence with the subgraphs of $G/e$ (resp.\@ $G-e$).
  Let $F\subseteq G$ such that $e\in E(F)$. We have: $k(F)=k(F/e)$ and $f(F)=f(F/e)$. The table \ref{tab:SignDepProp} lists some sign-dependent equalities concerning $s$ and the $\alpha$'s. Note that they are true for \emph{any} type of edge.
 \begin{table}[htb]
   \centering
   %% Les déclarations dans multicolumn prévalent sur celles faites dans tabular. ex : ici tabular déclare une ligne verticale en début de ligne mais le premier multicolumn l'enlève.
    \begin{tabular}{|c|l|l|}
        \hhline{~--}
        \multicolumn{1}{c|}{}&\multicolumn{1}{|c|}{\rule[-7pt]{0pt}{20pt}$\displaystyle e\in E(F)$}&\multicolumn{1}{|c|}{\rule[-3pt]{0pt}{15pt}$\displaystyle e\notin E(F)$}\\
        \hline
        \multirow{2}{20mm}{$\displaystyle\veps(e)=1$}&\rule[-3pt]{0pt}{15pt}$\displaystyle\bullet\; s(F)=s(F/e)$&$\bullet\; s(F)=s(F-e)$\\
        &$\bullet\; E_{+}(F)=E_{+}(F/e)\cup\{e\}$&\rule[-7pt]{0pt}{20pt}\\
        \hhline{|-|-|-|}
        \multirow{2}{20mm}{$\displaystyle\veps(e)=-1$}&\rule[-7pt]{0pt}{20pt}$\bullet\; s(F)=s(F/e)+1/2$&$\bullet\; s(F)=s(F/e)-1/2$\\
        &&$\bullet\; E_{-}(\Fb)=E_{-}(\Fb/e)\cup\{e\}$\\
        \hline
      \end{tabular}
      \caption{Sign-dependent properties}
      \label{tab:SignDepProp}
    \end{table}\\

    We then have
    \begin{align}
      Z(G;q,\mathbf{\alpha},c)=&
      \begin{cases}
       \displaystyle\alpha_{e}\sum_{\substack{F\subseteq G/e}}M(F,\balpha_{e})+\sum_{\substack{F\subseteq G-e}}M(F,\balpha_{e})&\text{if $e$ is positive,}\\
       \displaystyle q^{1/2}\sum_{\substack{F\subseteq G/e}}M(F,\balpha_{e})+q^{-1/2}\alpha_{e}\sum_{\substack{F\subseteq G-e}}M(F,\balpha_{e})&\text{if $e$ is negative}
      \end{cases}
    \end{align}
    which proves \eqref{eq:DelContOrdBridNOP} and \eqref{eq:DelContOrdBridNON}.\\

    Let us now consider an orientable trivial loop $e$. Let $F$ be a subgraph of $G$ containing $e$. We have $k(F)=k(F/e)-1$, $f(F)=f(F/e)$, $k(F)=k(F-e)$ and $f(F)=f(F-e)+1$. Together with the table \ref{tab:SignDepProp}, this proves the equations \eqref{eq:DelContOTrivP}, \eqref{eq:DelOTrivP}, \eqref{eq:DelContOTrivN} and \eqref{eq:DelOTrivN}.
  \end{proof}

The preceding proposition applies to all types of edges except the orientable nontrivial loops. For such edges, there is no simple formula like those of proposition \ref{prop:DelCont}. Indeed let $e$ be an orientable nontrivial loop, and let $F$ be a subgraph of $G$ such that $e\in E(F)$. The relationship between $k(F)$ and $k(F/e)$ (or $k(F-e)$) is $F$-dependent. The same holds for the number of faces $f$\footnote{I thank S.~Chmutov for having explained to me this point.}.\\

In some cases, the equations \eqref{eq:DelContOrdBridNOP} and \eqref{eq:DelContOrdBridNON} can be further simplified:
\begin{cor}\label{cor:DelCont2}
  Let $G$ be any ribbon graph. Then for every positive bridge $e$,
  \begin{subequations}
    \begin{align}
      Z(G;q,\balpha,c)=&(\alpha_{e}+qc)Z(G/e;q,\balpha_{e},c).\label{eq:BridgeP}
      \intertext{For every positive non-orientable trivial loop $e$}
      Z(G;q,\balpha,c)=&(\alpha_{e}+1)Z(G-e;q,\balpha_{e},c).\label{eq:NOTrivP}
      \intertext{For every negative bridge $e$}
      Z(G;q,\balpha,c)=&q^{1/2}(1+\alpha_{e}c)Z(G/e;q,\balpha_{e},c).\label{eq:BridgeN}
      \intertext{For every negative non-orientable trivial loop $e$}
      Z(G;q,\balpha,c)=&(q^{1/2}+q^{-1/2}\alpha_{e})Z(G-e;q,\balpha_{e},c).\label{eq:NOTrivN}
    \end{align}
  \end{subequations}
\end{cor}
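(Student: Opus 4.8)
The plan is to derive each identity from Proposition \ref{prop:DelCont}, whose relations \eqref{eq:DelContOrdBridNOP} and \eqref{eq:DelContOrdBridNON} already cover bridges and non-orientable loops (these being edges that are ``not an orientable loop''). In each case it then suffices to establish a single proportionality between $Z(G/e)$ and $Z(G-e)$ and to substitute it into the relevant relation; the four claimed formulas follow by elementary algebra.

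For a bridge $e$ I would exploit the behaviour of $Z$ under the two gluing operations. Since deleting a bridge disconnects its component into two pieces $G_{1}$ and $G_{2}$, while contracting it merges the two endpoints into a single cut-vertex, one has $G-e=G_{1}\cup G_{2}$ and $G/e=G_{1}\cdot G_{2}$. Applying \eqref{eq:DisjUnion} and \eqref{eq:1PtJoin} gives $Z(G-e)=Z(G_{1})Z(G_{2})$ and $Z(G/e)=\frac{1}{qc}Z(G_{1})Z(G_{2})$, hence the clean relation $Z(G-e)=qc\,Z(G/e)$ (the possible further components of $G$ multiply both sides by the same factor, so connectedness may be assumed). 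Substituting this into \eqref{eq:DelContOrdBridNOP} yields $Z(G)=(\alpha_{e}+qc)Z(G/e)$, which is \eqref{eq:BridgeP}, and substituting into \eqref{eq:DelContOrdBridNON} yields $Z(G)=q^{1/2}(1+\alpha_{e}c)Z(G/e)$, which is \eqref{eq:BridgeN}.

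For a non-orientable trivial loop $e$ the corresponding relation is $Z(G/e)=Z(G-e)$, after which \eqref{eq:DelContOrdBridNOP} collapses to $Z(G)=(\alpha_{e}+1)Z(G-e)$ and \eqref{eq:DelContOrdBridNON} to $Z(G)=(q^{1/2}+q^{-1/2}\alpha_{e})Z(G-e)$, i.e.\ \eqref{eq:NOTrivP} and \eqref{eq:NOTrivN}. To obtain $Z(G/e)=Z(G-e)$ I would argue that, for such a loop, contraction and deletion produce the same signed ribbon graph: since $G/e=G^{\{e\}}-e$ and both $G/e$ and $G-e$ carry the edge set $E(G)\setminus\{e\}$ with identical signs, it is enough to check that the partial dual with respect to a non-orientable trivial loop leaves the rest of the graph untouched, so that the two deletions coincide (cf.\ figure \ref{fig:NonOrLoopContraction}). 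Equivalently, one can compare the two subgraph expansions term by term: under the natural identification of spanning subgraphs the sign quantity $s$ agrees automatically, and one verifies that $k$ and $f$ also agree, the key geometric input being that a trivial loop is separating at its vertex and that a non-orientable loop adds no boundary component (a single such loop turns a vertex-disc into a M\"obius band, which has one boundary, just like the bare disc).

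The routine parts are the algebraic substitutions and the bridge case, which reduces immediately to the already-proved gluing formulas. The main obstacle is the non-orientable trivial loop: establishing $Z(G/e)=Z(G-e)$ rigorously requires a careful analysis of the effect of contracting such a loop on connectivity and on the number of boundary components, in contrast to the orientable trivial loop, where these quantities were computed directly in the proof of Proposition \ref{prop:DelCont}. This is where I would spend the most care, either by proving the ribbon-graph isomorphism $G/e\cong G-e$ from the definition of the partial dual, or by checking the equalities of $k$ and $f$ on each pair of corresponding spanning subgraphs.
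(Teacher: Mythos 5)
Your treatment of the bridge case is exactly the paper's argument: delete a bridge to get a disjoint union $G_{1}\cup G_{2}$, note that contraction gives a one-point join of the same pieces, and use \eqref{eq:DisjUnion} and \eqref{eq:1PtJoin} to obtain $Z(G-e)=qc\,Z(G/e)$, which substituted into \eqref{eq:DelContOrdBridNOP} and \eqref{eq:DelContOrdBridNON} yields \eqref{eq:BridgeP} and \eqref{eq:BridgeN}. The algebra is correct throughout.

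For the non-orientable trivial loop there is a gap in your preferred route. You propose to show that $G/e$ and $G-e$ are \emph{isomorphic} ribbon graphs; this is too strong and fails in general. The partial dual with respect to a non-orientable loop reverses the attachment data along one of the two arcs into which the loop's ends divide the boundary of its vertex, so if the piece $B$ meets the vertex along several line segments, $G/e$ and $G-e$ carry those segments in different cyclic orders and need not be isomorphic. The correct (and weaker) statement, which is what the paper uses, citing \cite{Chmutov2007aa}, is that $G/e$ and $G-e$ are two \emph{different one-point joins of the same two ribbon graphs}. That suffices because \eqref{eq:1PtJoin} shows $Z(G_{1}\cdot G_{2})=\frac{1}{qc}Z(G_{1})Z(G_{2})$ depends only on the two pieces and not on the choice of join point, hence $Z(G/e)=Z(G-e)$ without any isomorphism. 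Your fallback of comparing the subgraph expansions term by term (checking that $k$ and $f$ agree on corresponding spanning subgraphs) can be made to work, but it amounts to re-deriving this one-point-join fact by hand; the parenthetical about the M\"obius band addresses $f(G)$ versus $f(G-e)$ rather than the needed comparison of $f$ on \emph{all} corresponding subgraphs of $G/e$ and $G-e$. Replace the isomorphism claim by the one-point-join observation and the proof closes cleanly.
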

\begin{proof}
  For bridges, the argument is the usual one (see \cite{Bollobas2002aa} for example). If $e$ is a bridge, then $G-e$ is the disjoint union of two ribbon graphs $G_{1}$ and $G_{2}$. Then, using the equations \eqref{eq:DisjUnion} and \eqref{eq:1PtJoin}, we prove \eqref{eq:BridgeP} and \eqref{eq:BridgeN}.

If $e$ is a non-orientable trivial loop, then $G/e$ and $G-e$ are two different one-point joins of the same two graphs \cite{Chmutov2007aa}. As a consequence, their (multivariate signed) \BRp s are equal to each other.
\end{proof}

\begin{prop}[Shift of the weights]\label{prop:ShiftWeights}
  Let $E(G)\fide\{e_{i}\}_{i=1,\dots,e(G)}$ be the set of edges of $G$. Let $\mathbf{\alpha}+1_{i}=\{\alpha_{1},\dots,\alpha_{i}+1,\dots,\alpha_{e(G)}\}$ be the weights of $E(G)$ where $\alpha_{i}$ has been shifted by one. Then
  \begin{align}
    \intertext{if $e_{i}$ is positive and not an orientable loop,}
    Z(G;q,\mathbf{\alpha}+1_{i},c)=&Z(G;q,\mathbf{\alpha},c)+Z(G/e_{i};q,\mathbf{\alpha}_{i},c),\label{eq:ShiftWeight+}
    \intertext{if $e_{i}$ is a positive orientable trivial loop,}
    Z(G;q,\mathbf{\alpha}+1_{i},c)=&Z(G;q,\mathbf{\alpha},c)+q^{-1}Z(G/e_{i};q,\mathbf{\alpha}_{i},c),\label{eq:ShiftWeightOT}
    \intertext{and for any negative edge $e_{i}$,}
    Z(G;q,\mathbf{\alpha}+1_{i},c)=&Z(G;q,\mathbf{\alpha},c)+q^{-1/2}Z(G- e_{i};q,\mathbf{\alpha}_{i},c)\label{eq:ShiftWeight-}
  \end{align}
  where $\mathbf{\alpha}_{i}=\{\alpha_{1},\dots,\alpha_{i-1},\alpha_{i+1},\dots,\alpha_{e(G)}\}$.
\end{prop}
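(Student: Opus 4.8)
The plan is to prove the shift relations by directly manipulating the defining sum \eqref{eq:SMBRDef}, isolating the dependence of $Z$ on the single weight $\alpha_i$ and exploiting the elementary identity $(\alpha_i+1)=\alpha_i+1$ inside the product. First I would fix the edge $e_i$ and split the subgraph sum according to whether $e_i$ contributes its weight to the monomial $M(F,\balpha)$ or not. Concretely, $\alpha_i$ appears in $M(F,\balpha)$ exactly when $e_i\in E_+(F)\cup E_-(\bar F)$; that is, when $e_i$ is a positive edge \emph{in} $F$ or a negative edge \emph{not in} $F$. In every such case the monomial carries a single factor $\alpha_i$, so replacing $\alpha_i$ by $\alpha_i+1$ multiplies those monomials by $(\alpha_i+1)$ and leaves all other monomials (where $\alpha_i$ is absent) unchanged. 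Writing $(\alpha_i+1)=\alpha_i+1$ therefore gives $Z(G;q,\balpha+1_i,c)=Z(G;q,\balpha,c)+S$, where $S$ is the sum over precisely those subgraphs $F$ that contributed a factor $\alpha_i$, but now with that $\alpha_i$ stripped off.

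The main task is then to identify the correction term $S$ with the appropriate reduced polynomial. For a positive edge $e_i$, the contributing subgraphs are exactly those with $e_i\in E(F)$, so $S=\sum_{F\colon e_i\in E(F)}q^{k(F)+s(F)}\big(\prod_{e'\neq e_i}\cdots\big)c^{f(F)}$; this is precisely the first term of the deletion–contraction relation \eqref{eq:DelContOrdBridNOP}, which by the proof of Proposition \ref{prop:DelCont} equals $Z(G/e_i;q,\balpha_i,c)$ (or $q^{-1}Z(G/e_i;\dots)$ for a positive orientable trivial loop, as in \eqref{eq:DelContOTrivP}). For a negative edge $e_i$, the contributing subgraphs are those with $e_i\notin E(F)$, i.e.\ $e_i\in E(\bar F)$; these are in bijection with subgraphs of $G-e_i$, and the sign-dependent bookkeeping in Table \ref{tab:SignDepProp}—specifically $s(F)=s(F-e_i)-1/2$ when $e_i$ is negative and absent from $F$—supplies the factor $q^{-1/2}$, yielding $S=q^{-1/2}Z(G-e_i;q,\balpha_i,c)$ and hence \eqref{eq:ShiftWeight-}.

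Rather than recomputing these corrections from scratch, the cleanest route is to read them off directly from Proposition \ref{prop:DelCont}: in each deletion–contraction relation the term containing the factor $\alpha_{e_i}$ is exactly the coefficient of $\alpha_{e_i}$ in $Z(G;q,\balpha,c)$ viewed as a degree-one polynomial in $\alpha_{e_i}$, and shifting $\alpha_{e_i}\mapsto\alpha_{e_i}+1$ adds precisely that coefficient. For the positive non-loop case the relevant term is $\alpha_{e_i}Z(G/e_i;\dots)$, giving \eqref{eq:ShiftWeight+}; for the positive orientable trivial loop it is $q^{-1}\alpha_{e_i}Z(G/e_i;\dots)$ from \eqref{eq:DelContOTrivP}, giving \eqref{eq:ShiftWeightOT}; and for a negative edge it is $q^{-1/2}\alpha_{e_i}Z(G-e_i;\dots)$ from \eqref{eq:DelContOrdBridNON}, giving \eqref{eq:ShiftWeight-}.

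The only point requiring care—the expected main obstacle—is confirming that $Z$ really is linear in each individual $\alpha_{e_i}$, i.e.\ that every contributing monomial carries $\alpha_{e_i}$ to the first power and never higher. This follows immediately from the structure of \eqref{eq:SMBRDef}, since for a fixed subgraph $F$ the product $\prod_{e\in E_+(F)\cup E_-(\bar F)}\alpha_e$ contains each $\alpha_e$ at most once; once this linearity is noted, the identity $Z(G;q,\balpha+1_i,c)=Z(G;q,\balpha,c)+\partial_{\alpha_{e_i}}Z(G;q,\balpha,c)$ holds formally, and reading off $\partial_{\alpha_{e_i}}Z$ from the contraction–deletion relations of Proposition \ref{prop:DelCont} completes all four cases.
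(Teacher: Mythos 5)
Your proposal is correct and takes essentially the same route as the paper: the paper's device of writing $Z(G;q,\mathbf{\alpha}+1_{i},c)=Z(G;q,\mathbf{\alpha},c)+\int_{0}^{1}d\zeta\,\tfrac{dZ}{d\zeta}$ is just a repackaging of your observation that $Z$ is affine in each $\alpha_{e_{i}}$, and both arguments then identify the coefficient of $\alpha_{e_{i}}$ via the same bijections (subgraphs containing $e_{i}$ with subgraphs of $G/e_{i}$, subgraphs omitting $e_{i}$ with subgraphs of $G-e_{i}$) and the same bookkeeping for $k$, $f$ and $s$. One small caution about your ``cleanest route'': reading the negative case off from \eqref{eq:DelContOrdBridNON} does not cover negative orientable \emph{nontrivial} loops, to which Proposition \ref{prop:DelCont} does not apply, whereas your direct computation (and the paper's) does establish \eqref{eq:ShiftWeight-} for every negative edge, so that direct argument should be kept as the actual proof of that case.
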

%\ClearDraftOverlay
\begin{proof}
  Let $e_{i}\in E(G)$. We have
  \begin{align}
    Z(G;q,\mathbf{\alpha}+1_{i},c)=&\left.Z(G;q,\mathbf{\alpha}+\zeta 1_{i},c)\rabs_{\zeta=1}\nonumber\\
      =&Z(G;q,\mathbf{\alpha},c)+\int_{0}^{1}d\zeta\,\frac{dZ}{d\zeta}(G;q,\mathbf{\alpha}+\zeta 1_{i},c). 
  \end{align}
  We now focus on the derivative term. We distinguish between three different cases:
  \begin{enumerate}
  \item $\veps(e_{i})=1$ and $e_{i}$ is not an orientable loop: the only non-vanishing term under derivation in the sum (\ref{eq:SMBRDef}) corresponds to the subgraphs $F$ such that $e_{i}\in E(F)$. The sum is then in one-to-one correspondence with the sum over the subgraphs of $G/e_{i}$. We have
    \begin{align}
      \frac{dZ}{d\zeta}(G;q,\mathbf{\alpha}+\zeta 1_{i},c)=&\sum_{F\subseteq G\tq e_{i}\in E(F)}q^{k(F)+s(F)}\Big(\prod_{e\in E_{+}(F)-\{ e_{i}\}}\alpha_{e}\prod_{e\in E_{-}(\bar{F})}\alpha_{e}\Big)c^{f(F)}.
    \end{align}
    From $F$ to $F/e_{i}$, $k,f$ and $s$ do not change. The integration over $\zeta$ equals one which proves (\ref{eq:ShiftWeight+}).
  \item $e_{i}$ is a positive orientable trivial loop: the only difference with the previous case is that $k(F)=k(F/e_{i})-1$ which proves \eqref{eq:ShiftWeightOT}.
  \item $\veps(e_{i})=-1$: the non-vanishing terms correspond to the subgraphs $F$ which do not contain $e_{i}$. The edge $e_{i}$ belongs, then, to $\bar{F}$. The sum is in one-to-one correspondence with the sum over the subgraphs of $G- e_{i}$. $e_{i}$ being negative, we have $s(F)=s(F- e_{i})-\frac 12$ and we get (\ref{eq:ShiftWeight-}).
  \end{enumerate}
\end{proof}
\begin{rem}
  Let $Z'(G;q,\mathbf{\alpha},c)\defi q^{s(G)}Z(G;q,\mathbf{\alpha},c)$. Then the same proposition holds but with a factor $1$ instead of $q^{-1/2}$ in (\ref{eq:ShiftWeight-}).
\end{rem}

% \section{Universality}
% \label{sec:universality}

\section{Tree expansions}
\label{sec:spann-tree-expans}

\subsection{Spanning tree expansion}
\label{sec:spann-tree-expans-1}

The original \BRp{} can be defined by a spanning tree expansion (see \cite{Bollobas2002aa}, section $6$).

Given a graph $G$, a spanning tree is a connected spanning subgraph with vanishing nullity. For the sake of completeness we recall the definitions of  the activities involved in this spanning tree expansion.
\begin{defn}[Activities wrt a spanning tree]
  Let $G$ be a connected ribbon graph and $\prec$ be an order on the set $E(G)$ of edges of $G$. Let $T$ be a spanning tree of $G$. Let $e\in E(T)$, we write $U_{T}(e)$ for the \emph{cut} defined by $e$:
  \begin{align}
    U_{T}(e)\defi&\lb f\in E(G)\setminus E(T)\tqs (T-e)+f \text{ is a spanning tree} \rb.
  \end{align}
  For $e\in E(G)\setminus E(T)$ we write $Z_{T}(e)$ for the cycle defined by $e$, namely the unique cycle of $T+e$.

  An edge $e\in E(T)$ is said \textbf{internally active} if it is the smallest edge (wrt $\prec$) in $U_{T}(e)$. Otherwise it is \textbf{internally inactive}. The number of internally active edges (wrt $T$ and $\prec$) is denoted by $\mathbf{i(T)}$.

  An edge $e\in E(G)\setminus E(T)$ is said \textbf{externally active} if it is the smallest edge (wrt $\prec$) in $Z_{T}(e)$. Otherwise it is \textbf{externally inactive}. The set of externally active edges of $G$ is $\mathbf{EA(T)}$ with $\mathbf{j(T)}\defi\card EA(T)$.
\end{defn}

Given an order on the set of edges of a ribbon graph $G$,
\begin{align}
  R(G;x+1,y,z,w)=&\sum_{F\subseteq G}x^{k(F)-k(G)}y^{n(F)}z^{k(F)-f(F)+n(F)}w^{t(F)}\label{eq:BRDef}\\
  =&\sum_{T}(x+1)^{i(T)}\sum_{S\subset EA(T)}y^{n(T\cup S)}z^{1-f(T\cup S)+n(T\cup S)}w^{t(T\cup S)}.\label{eq:BRSpanExp}
\end{align}
Clearly, for $z=w=1$, the sum over $S$ reduces to $(y+1)^{j(T)}$ (thanks to $n(T\cup S)=e(S)$). This is then the spanning tree expansion of the Tutte polynomial. But contrary to this one, the spanning tree expansion of the \BRp{} cannot be expressed as easily. This is partly due to the lack of reduction relations for the nontrivial loops.\\

In this section we give a \ste{}, similar to (\ref{eq:BRSpanExp}), for the multivariate signed polynomial. We restrict ourselves to connected graphs but the extension to all ribbon graphs is trivial. So let $G$ be any connected ribbon graph and $T$ a spanning tree of $G$. The set $E(G)$ is endowed with an order $\prec$. We define the following subsets of $E(G)$:
\begin{itemize}
\item the subset of positive (resp.\@ negative) internally active edges: $IA_{\pm}(T)$,
\item the subset of positive (resp.\@ negative) internally inactive edges: $II_{\pm}(T)$,
\item the subset of positive (resp.\@ negative) externally active edges: $EA_{\pm}(T)$,\\
  $EA(T)\defi EA_{+}(T)\cup EA_{-}(T)$,
\item the subset of positive (resp.\@ negative) externally inactive edges: $EI_{\pm}(T)$,
\item the subset of positive (resp.\@ negative) trivial orientable loops: $TO_{\pm}(G)$,\\
  $TO\defi TO_{+}(G)\cup TO_{-}(G)$ and
\item the subset of positive (resp.\@ negative) trivial non-orientable loops: $TNO_{\pm}(G)$,\\
  $TNO\defi TNO_{+}(G)\cup TNO_{-}(G)$.
\end{itemize}
\vspace{\baselineskip}%
Let $w(G,\prec;q,\balpha,c)$ be the following polynomial:
\begin{align}
  w(G,\prec)\defi& q^{k(G)}\Big(\prod_{e\in TO_{+}}(\alpha_{e}c+1)\Big)\Big(\prod_{e\in TO_{-}}\sqrt q(c+\alpha_{e}/q)\Big)\nonumber\\
  &\Big(\prod_{e\in TNO_{+}}(\alpha_{e}+1)\Big)\Big(\prod_{e\in TNO_{-}}\sqrt q(\alpha_{e}/q+1)\Big)\nonumber\\
  &\sum_{T}\Big(\prod_{e\in IA_{+}(T)}(\alpha_{e}+qc)\Big)\Big(\prod_{e\in IA_{-}(T)}\sqrt{q}(1+\alpha_{e}c)\Big)%\nonumber\\
%  &
  \Big(\prod_{e\in II_{+}(T)}\alpha_{e}\Big)\sqrt{q}^{\labs II_{-}(T)\rabs}\nonumber\\
  &\Big(\prod_{e\in EI_{-}(T)}\alpha_{e}/\sqrt q\Big)\sum_{\substack{S\subset\\EA(T)-TO-TNO}}q^{s(S)}\Big(\prod_{\substack{e\in E_{+}(S)\cup\\E_{-}(\bar{S})}}\alpha_{e}\Big)c^{f(T\cup S)}\label{eq:SpanExpMultiBR}
\end{align}
where the first sum runs over all spanning trees in $G$ and $\bar{S}$ is the complement of $S$ in $EA(T)-TO-TNO$.
\begin{thm}\label{thm:SpanExpMultiBR}
  For any connected ribbon graph $G$ and any order $\prec$ on $E(G)$,\\
  $w(G,\prec;q,\balpha,c)=Z(G;q,\balpha,c)$.
\end{thm}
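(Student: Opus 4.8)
The plan is to prove the identity by regrouping the subgraph sum \eqref{eq:SMBRDef} according to a spanning tree attached to each subgraph, in the spirit of the classical activities expansion. The combinatorial backbone is the interval partition of the Boolean lattice of spanning subgraphs induced by a linear order (Crapo's partition): for $G$ connected one has the disjoint decomposition $2^{E(G)}=\bigsqcup_{T}\,[\,II(T),\,E(T)\cup EA(T)\,]$, the union running over all spanning trees $T$, where the block attached to $T$ consists of the subgraphs $F$ with $II(T)\subseteq E(F)\subseteq E(T)\cup EA(T)$. In such a block the internally inactive tree edges are forced in, the externally inactive non-tree edges are forced out, and the internally active tree edges together with the externally active non-tree edges are free. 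I would first record this partition, so that $Z(G)=\sum_{T}\Sigma_{T}$ where $\Sigma_{T}$ collects the weights $M(F,\balpha)$ of \eqref{eq:SMBRDef} over the block of $T$, and then identify $\Sigma_{T}$ with the $T$-summand of \eqref{eq:SpanExpMultiBR}.

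Next I would factorize $\Sigma_{T}$. The weight $q^{s(F)}\prod_{e\in E_{+}(F)\cup E_{-}(\bar F)}\alpha_{e}$ is multiplicative over edges: since $s(F)=e_{-}(F)-\tfrac12 e_{-}(G)$, a positive edge contributes $\alpha_{e}$ when present and $1$ otherwise, while a negative edge contributes $\sqrt q$ when present and $\alpha_{e}/\sqrt q$ otherwise. Evaluated on the forced edges this at once yields the factors $\big(\prod_{II_{+}}\alpha_{e}\big)\sqrt q^{\,|II_{-}(T)|}$ and $\prod_{EI_{-}}\alpha_{e}/\sqrt q$ of \eqref{eq:SpanExpMultiBR}, the positive externally inactive edges contributing $1$. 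The genuinely non-multiplicative data are $q^{k(F)}$ and $c^{f(F)}$, and handling them is the core of the proof.

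The key structural lemma is that an externally active edge and an internally active edge never lie on each other's fundamental cycle, respectively cut: were the internally active $g$ on the fundamental cycle of the externally active $e$, then $e$ would lie in the fundamental cut of $g$, forcing $e\prec g$ and $g\prec e$ at once. Hence every externally active edge has both endpoints in one component of $II(T)$, so adjoining externally active edges alters neither $k$ nor the bridge character of the internally active edges; once the components of $II(T)$ are contracted the present internally active edges form a forest, so each is a bridge of $F$. Summing each internally active edge over present and absent and using the bridge behavior $k(F)=k(F\setminus e)-1$, $f(F)=f(F\setminus e)-1$ (a consequence of \eqref{eq:1PtJoin} and the contraction identities in the proof of Proposition \ref{prop:DelCont}), I would extract precisely $\alpha_{e}+qc$ for a positive, and $\sqrt q\,(1+\alpha_{e}c)$ for a negative, internally active edge, reducing the base subgraph to $E(T)\cup S'$ with $k(E(T)\cup S')=k(G)$. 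This produces the $q^{k(G)}$ prefactor and the internally active products of \eqref{eq:SpanExpMultiBR}.

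Finally I would split the remaining sum over $S'\subseteq EA(T)$ into trivial loops and the rest. Every trivial loop is externally active for every spanning tree, so $TO$ and $TNO$ lie inside each $EA(T)$; using the per-subgraph face identities of the loop cases (an orientable trivial loop creates one boundary component and a non-orientable one creates none, as used in the proofs of Proposition \ref{prop:DelCont} and Corollary \ref{cor:DelCont2}), each trivial loop factors out of the $S'$-sum as $\alpha_{e}c+1$, $\sqrt q\,(c+\alpha_{e}/q)$, $\alpha_{e}+1$ or $\sqrt q\,(\alpha_{e}/q+1)$, i.e.\ the global products of \eqref{eq:SpanExpMultiBR}, global because $TO$ and $TNO$ are independent of $T$. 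What survives is exactly $\sum_{S\subseteq EA(T)-TO-TNO}q^{s(S)}\big(\prod_{e\in E_{+}(S)\cup E_{-}(\bar S)}\alpha_{e}\big)c^{f(E(T)\cup S)}$, the inner sum of \eqref{eq:SpanExpMultiBR}, and summing over $T$ closes the argument. The main obstacle is precisely this non-additivity of $k$ and $f$: one cannot read the exponents off edge by edge as in the Tutte case. Its sharpest manifestation is the orientable nontrivial loop, the single edge type without a contraction/deletion rule (cf.\ the remark after Proposition \ref{prop:DelCont}); being always externally active and non-trivial it is never forced, never extracted as a factor, and survives inside $f(E(T)\cup S)$ --- which is exactly why the inner $S$-sum resists further simplification. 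Establishing the key lemma and the $(k,f)$ behavior of bridges and trivial loops is where the work concentrates; the remaining factorization is bookkeeping.
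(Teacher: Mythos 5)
Your proof is correct, but it takes a genuinely different route from the paper. The paper argues by induction on the number of non-loop edges: it verifies the base case of one-vertex ribbon graphs (where every edge is an externally active loop and \eqref{eq:SpanExpMultiBR} collapses onto \eqref{eq:SMBRDef} via Proposition \ref{prop:DelCont} and Corollary \ref{cor:DelCont2}), and then shows that $w(G,\prec)$ obeys the same contraction/deletion recursion as $Z$ when applied to the $\prec$-largest non-loop edge --- using that such an edge is internally active in every spanning tree if it is a bridge, inactive in every tree if it is ordinary, and that contracting or deleting it does not disturb the activities of the remaining edges. You instead give a direct, non-inductive proof: you partition the Boolean lattice of spanning subgraphs into the Crapo intervals $[\,II(T),\,E(T)\cup EA(T)\,]$, use the edge-by-edge multiplicativity of $q^{s(F)}\prod\alpha_{e}$ to dispose of the forced edges, and control the genuinely non-multiplicative exponents $k(F)$ and $f(F)$ through the orthogonality of activities (an internally active edge never lies on the fundamental cycle of an externally active one), which makes every present internally active edge a bridge of every subgraph in its block and lets you extract $(\alpha_{e}+qc)$, $\sqrt q(1+\alpha_{e}c)$ and the trivial-loop factors per subgraph. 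Both arguments ultimately rest on the same per-subgraph identities for $k$ and $f$ of bridges and trivial loops; yours has the merit of exhibiting explicitly where the obstruction to a full factorization sits (the nontrivial loops surviving inside $c^{f(T\cup S)}$), at the price of invoking the interval partition and the activity orthogonality, which the paper's induction gets for free. Two points deserve to be spelled out rather than asserted: a proof or reference for the interval partition itself, and the fact that the internally active bridges can be peeled off one at a time (deleting a bridge leaves the remaining internally active edges bridges, so the telescoping extraction is well defined). Neither is a gap in substance.
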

\begin{proof}
  It is very similar to the one of equation (\ref{eq:BRSpanExp}) in \cite{Bollobas2002aa}. The proof is made by induction on the number of edges of $G$ which are not loops. If $G$ has no such edges, it is a one-vertex ribbon graph and all its edges are externally active. In this case, $k(G)=k(S)$ for all subsets $S$ and there is only one spanning tree namely $(V(G),\emptyset)$. Then the expression (\ref{eq:SpanExpMultiBR}) for $w(G,\prec)$ equals the definition of the multivariate signed polynomial (\ref{eq:SMBRDef}) after the use of proposition \ref{prop:DelCont} and corollary \ref{cor:DelCont2}.\\

Otherwise, if $G$ has edges which are not loops, we choose the last edge $e$ in the order $\prec$. If $e$ is a bridge, every spanning tree contains $e$ and it is always internally active. The sum over $T(G)$ is in one-to-one correspondence with the sum over the spanning trees of $G/e$. The contraction of $e$ does not affect the activities of the other edges:
\begin{align}
  w(G,\prec)=&%
  \begin{cases}
    (\alpha_{e}+qc)\,w(G/e,\prec)&\text{if $e$ is positive,}\\
    \sqrt q(1+\alpha_{e}c)\,w(G/e,\prec)&\text{if $e$ is negative.}
  \end{cases}
  \label{eq:SpTrBridge}
\end{align}
If $e$ is ordinary, it is neither internally nor externally active. Its contraction or deletion does not change the activities of the other edges. When $e$ belongs to a spanning tree of $G$, it is internally inactive whereas when it does not belong to a tree, it is externally inactive. Thus we have:
\begin{align}
  w(G,\prec)=&%
  \begin{cases}
    \alpha_{e}w(G/e,\prec)+w(G-e,\prec)&\text{if $e$ is positive,}\\
    \sqrt q w(G/e,\prec)+\frac{\alpha_{e}}{\sqrt q}w(G-e,\prec)&\text{if $e$ is negative.}
  \end{cases}
  \label{eq:SpTrOrd}
\end{align}
As a consequence, $w(G,\prec)$ equals the polynomial (\ref{eq:SMBRDef}) when $G$ has only loops. When $G$ has not only loops, $w(G,\prec)$ obeys the same reduction relations as the multivariate signed \BRp{}. These relations allow one to express $w$ as a (weighted) sum of contributions of one-point graphs. This proves the theorem.
\end{proof}

\subsection{Quasi-tree expansion}
\label{sec:quasi-tree-expansion}

More recently, A.~Champanerkar, I.~Kofman and N.~Stoltzfus found another tree expansion for the \BRp{} \cite{Champanerkar2007aa}. Its advantage over the spanning tree expansion of B.~Bollob\'as and O.~Riordan \cite{Bollobas2002aa} is that it requires fewer summands and the associated weights are defined topologically. We now recall this new \emph{quasi-tree} expansion. Then we will give its multivariate analogue.\\

The quasi-tree expansion in \cite{Champanerkar2007aa}, being only valid for orientable ribbon graphs, we restrict ourselves to such a class in this subsection. Note that an orientable ribbon graph can always be drawn with untwisting edges.
\begin{defn}[Quasi-tree]
  Let $G$ be an orientable ribbon graph. A quasi-tree $Q$ is a spanning subgraph of $G$ with $f(Q)=1$. The set of quasi-trees in $G$ is denoted by $\mathbf{\cQ_{G}}$.
\end{defn}
%Note that a quasi-tree being spanning, it necessarily contains a spanning tree. 
Any orientable ribbon graph $G$ can be represented by a cyclic graph namely a set of half-edges $\bbH$, a fixed-point free involution $\sigma_{1}$ and a permutation $\sigma_{0}$ of $\bbH$. The cycles of $\sigma_{0}$ form the vertex set of $G$, those of $\sigma_{1}$ its edges. The faces of $G$ are given by the orbits of $\sigma_{2}\defi \sigma_{1}\circ\sigma_{0}^{-1}$.

Given a total order on the edges of $G$, one can define the activities wrt a quasi-tree. To this end, the authors of \cite{Champanerkar2007aa} proved the following proposition:
\begin{prop}
  Let $G$ be a connected orientable ribbon graph. Every quasi-tree $Q$ of $G$ corresponds to the ordered chord diagram $C_{Q}$ with consecutive markings in the positive direction given by the following permutation on $\bbH$:
  \begin{align}
    \sigma(i)\defi&%
    \begin{cases}
      \sigma_{0}(i)&\text{if $i\notin Q$,}\\
      \sigma_{2}^{-1}(i)&\text{if $i\in Q$}.
    \end{cases}
  \end{align}
\end{prop}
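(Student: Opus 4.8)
The plan is to realise the permutation $\sigma$ as the vertex rotation of a suitable partial dual of $G$, exploiting the elementary fact that a one-vertex orientable ribbon graph is \emph{exactly} an ordered chord diagram. Concretely, I would dualise $G$ with respect to the edge set $E(Q)$ of the quasi-tree and show both that $G^{E(Q)}$ has a single vertex and that the cyclic arrangement of half-edges around it is governed by $\sigma$.

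First I would recall, from the construction of the partial dual in section \ref{sec:partial-duality}, that the combinatorial representation of $G^{E'}$ has one circle per boundary component of the spanning subgraph $F_{E'}$; that is, $v(G^{E'})=f(F_{E'})$. Taking $E'=E(Q)$, so that $F_{E'}=Q$, and using that $Q$ is a quasi-tree, i.e.\@ $f(Q)=1$, this yields $v(G^{E(Q)})=1$. Hence $G^{E(Q)}$ is a one-vertex orientable ribbon graph, which is precisely an ordered chord diagram: its single vertex-disc, drawn as a circle oriented in the positive direction, carries all the half-edges of $\bbH$, and the two half-edges of each edge form a chord (the involution $\sigma_{1}$ being preserved under dualisation through the edge bijection $\phi$ of section \ref{sec:partial-duality}). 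This chord diagram is $C_{Q}$, and its sequence of consecutive markings is read off from the vertex rotation of $G^{E(Q)}$.

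It then remains to identify that vertex rotation with $\sigma$. I would argue by induction on $\card E'$, using $G^{E'\cup\{e\}}=(G^{E'})^{\{e\}}$ from Lemma \ref{lem:SimpleProp} to reduce to a single-edge dualisation. For one edge $e$ with half-edges $a,b$ (so that $\sigma_{1}$ contains the transposition $(a\,b)$), tracing through the combinatorial representation shows that dualising $e$ leaves the rotation unchanged at every half-edge $i\neq a,b$ and only swaps the images of $a$ and $b$; equivalently the vertex rotation is right-multiplied by $(a\,b)$, i.e.\@ $\sigma_{0}\mapsto\sigma_{0}\circ(a\,b)$. Since the transpositions attached to distinct edges act on disjoint half-edges they commute, and iterating over the edges of $Q$ gives the rotation $\sigma_{0}\circ\tau$ with $\tau=\prod_{e\in E(Q)}(a\,b)$. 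For $i\notin Q$ this is $\sigma_{0}(i)$, whereas for $i\in Q$ it equals $\sigma_{0}(\sigma_{1}(i))=\sigma_{2}^{-1}(i)$, which is exactly the piecewise definition of $\sigma$. As a consistency check, $E'=\emptyset$ recovers $\sigma=\sigma_{0}$ (the vertices of $G$) and $E'=E(G)$ recovers $\sigma=\sigma_{2}^{-1}$ (the faces of $G$, read in the positive direction), matching the natural duality; and the vertex count above forces $\sigma$ to be a single $\card\bbH$-cycle.

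The main obstacle is this single-edge step: checking carefully that the partial-dual surgery on the combinatorial representation amounts to right-multiplication of the vertex rotation by $(a\,b)$, with the orientation bookkeeping arranged so that $\sigma_{2}^{-1}$ (rather than $\sigma_{2}$) appears and the markings come out in the positive direction. Here orientability is essential: it is what lets us encode the edges by a genuine fixed-point-free involution $\sigma_{1}$ and record a dualisation as an unsigned transposition, whereas the non-orientable case would require signed permutations and is rightly excluded in this subsection. Everything else is routine: the split/merge behaviour of $\sigma_{0}\circ(a\,b)$ (a transposition of two elements in the same cycle splits it, of two elements in different cycles merges them) matches the effect of dualising a loop, respectively a non-loop, confirming the formula, and the identification of $C_{Q}$ with the chord diagram of $G^{E(Q)}$ then completes the proof.
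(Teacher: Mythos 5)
The paper does not actually prove this proposition: it is imported verbatim from \cite{Champanerkar2007aa} (``the authors of \cite{Champanerkar2007aa} proved the following proposition''), so there is no in-paper argument to compare yours against. Judged on its own terms, your route is correct and is a genuinely different (and, in the context of this paper, rather natural) argument from the direct boundary-walk analysis of Champanerkar--Kofman--Stoltzfus: you observe that the vertices of $G^{E'}$ are by construction the boundary components of $F_{E'}$, so $v(G^{E(Q)})=f(Q)=1$ and $G^{E(Q)}$ is a one-vertex orientable ribbon graph, i.e.\@ an ordered chord diagram; it then only remains to identify its vertex rotation with $\sigma$. Your identification is right: single-edge dualisation replaces the rotation $\sigma_{0}$ by $\sigma_{0}\circ(a\,b)$ (this checks out on a trivial loop, on a non-loop edge, and on the interleaved two-loop torus graph, and the split/merge behaviour of multiplying by a transposition matches the vertex count $v(G^{\{e\}})=f(F_{\{e\}})$), the transpositions for distinct edges commute, and $\sigma_{0}\circ\sigma_{1}=\sigma_{2}^{-1}$ with the paper's convention $\sigma_{2}=\sigma_{1}\circ\sigma_{0}^{-1}$, which reproduces the piecewise formula for $\sigma$ exactly. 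The iteration through Lemma \ref{lem:SimpleProp} is legitimate since each intermediate partial dual is again orientable with the same edge involution.

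The one point that keeps this from being a complete proof is the one you flag yourself: the local computation that dualising a single edge right-multiplies the rotation by $(a\,b)$ (with the long-side arrows labelled so that the markings come out in the positive direction and $\sigma_{2}^{-1}$, not $\sigma_{2}$, appears) is asserted rather than carried out. That computation is true and is the standard description of partial duality for orientable rotation systems, but it is the entire technical content of the statement, so in a self-contained write-up it would have to be traced through the combinatorial representation of section \ref{sec:partial-duality} explicitly, including the convention for matching each long side of an edge in $E'$ with one of its original half-edges. With that check supplied, your argument is a clean alternative to the cited one, and it has the added benefit of making explicit the fact that quasi-trees of $G$ are precisely the edge sets whose partial dual has a single vertex.
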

\begin{defn}[Activities wrt a quasi-tree]
  Given a connected orientable ribbon graph $G$ and a quasi-tree $Q$ of $G$, an edge of $G$ is \textbf{internal} if it belongs to $E(Q)$ and \textbf{external} otherwise. Moreover an edge is said \textbf{live} if its corresponding chord in $C_{Q}$ does not intersect any lower-ordered chord. If it does, the edge is called \textbf{dead}.

One lets $\mathbf{\cD(Q)}$ denote the spanning subgraph of $G$, the edges of which are the internally dead edges. $\mathbf{\cI(Q)}$ is the set of internally live edges and $\mathbf{\cE(Q)}$ the set of externally live edges.
\end{defn}
For a given quasi-tree $Q$, one defines the graph (not the ribbon graph) $\mathbf{G_{Q}}$ as the graph, the vertices of which are the components of $\cD(Q)$, and the edges of which are the internally live edges of $G$. One can now state the main theorem of \cite{Champanerkar2007aa}:
\begin{thmb}[Quasi-tree expansion of the \BRp{} \cite{Champanerkar2007aa}]
  Let $G$ be a connected orientable ribbon graph. With the preceding definitions, we have:
  \begin{align}
    R(G;x,y,z)=&\sum_{Q\subset G}y^{n(\cD(Q))}z^{2g(\cD(Q))}(1+y)^{|\cE(Q)|}\,T(G_{Q};x,1+yz^{2})
  \end{align}
  where $\displaystyle T(G_{Q},x,y)=\sum_{F\subset G_{Q}}(x-1)^{r(G_{Q})-r(F)}(y-1)^{n(F)}$ is the Tutte polynomial of $G_{Q}$.
\end{thmb}
In order to prove this theorem, the authors of \cite{Champanerkar2007aa} proved a series of results. We gather, in the following lemma, those results which we need in order to prove the quasi-tree expansion of $Z$:
\begin{lemma}\label{lem:QTSteps}
  Let $G$ be a connected orientable ribbon graph and $\cS_{G}$ its set of spanning subgraphs. Then $\cS_{G}$ is in one-to-one correspondence with $\bigcup_{Q\in\cQ_{G}}\cI(Q)\times\cE(Q)$. Namely, to each spanning subgraph $F$ there corresponds a unique quasi-tree $Q_{F}$. Then $E(F)=\cD(Q_{F})\cup S,\,S\subset \cI(Q_{F})\cup\cE(Q_{F})$. In addition, for a given quasi-tree $Q$, let $S=S_{1}\cup S_{2},\,S_{1}\subset\cI(Q)$ and $S_{2}\subset\cE(Q)$. With a slight abuse of notation, we have:
  \begin{itemize}
  \item $k(\cD\cup S)=k(\cD\cup S_{1})=k(W)$, where $W$ is the spanning subgraph of $G_{Q}$ the edge-set of which is $S_{1}$,
  \item $f(\cD\cup S)=f(\cD)-|S_{1}|+|S_{2}|$.
  \end{itemize}
\end{lemma}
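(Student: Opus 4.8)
The plan is to read this lemma as a consolidation of the activity machinery of \cite{Champanerkar2007aa} and to organise the argument around its three separate assertions: the bijection, the component formula, and the face formula. The first is the genuinely substantive one and I would treat it by invoking the activity partition established in \cite{Champanerkar2007aa}; the two numerical formulas I would then derive by short, self-contained bookkeeping once the topological roles of internally live and externally live edges have been isolated.

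For the bijection, I would exhibit the decomposition of the Boolean lattice $\cS_{G}$ into the intervals $[\cD(Q),\,\cD(Q)\cup\cI(Q)\cup\cE(Q)]$ indexed by quasi-trees $Q\in\cQ_{G}$. The claim is that these intervals are pairwise disjoint and cover $\cS_{G}$, so that each spanning subgraph $F$ lies in exactly one of them; the associated quasi-tree is then $Q_{F}$, and $S\defi E(F)\setminus E(\cD(Q_{F}))$ splits uniquely as $S=S_{1}\sqcup S_{2}$ with $S_{1}\subset\cI(Q_{F})$ and $S_{2}\subset\cE(Q_{F})$ since the two kinds of live edges are disjoint. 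The heart of this step is showing the assignment $F\mapsto Q_{F}$ is well defined, which is precisely the chord-diagram analysis of \cite{Champanerkar2007aa} based on the permutation $\sigma$ and the live/dead dichotomy recorded in the preceding definition; I would invoke that result rather than reprove it, and everything else about the correspondence follows formally.

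For the component formula, the equality $k(\cD\cup S_{1})=k(W)$ is essentially definitional: $G_{Q}$ is obtained from $\cD(Q)$ by contracting each of its connected components to a single vertex and retaining the internally live edges, and contracting a spanning subgraph preserves the partition of the vertex set into components. Hence adjoining the internal edges $S_{1}$ to $\cD$ yields the same component structure as the subgraph $W$ of $G_{Q}$ with edge-set $S_{1}$. For $k(\cD\cup S)=k(\cD\cup S_{1})$ I would use the topological property, recorded in \cite{Champanerkar2007aa}, that every externally live edge joins two half-edges lying in the same component of $\cD(Q)$; adjoining such edges to $\cD\cup S_{1}$ therefore never merges components, so the addition of $S_{2}$ leaves $k$ unchanged.

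For the face formula I would add the edges of $S$ one at a time and track boundary components. An internally live edge joins two distinct components of $\cD$, so it behaves like a bridge, fusing two boundary components into one and lowering $f$ by one (and $k$ by one, consistently with the previous step); an externally live edge has both endpoints in one component, so it behaves like a trivial orientable loop, splitting one boundary component into two and raising $f$ by one while leaving $k$ fixed. Summing over $S_{1}$ and $S_{2}$ gives $f(\cD\cup S)=f(\cD)-|S_{1}|+|S_{2}|$. Order-independence can be checked against the Euler relation $v-e+f=2(k-g)$ applied to each component, since $v$ is fixed and each added edge raises $e$ by one while changing $(k,f,g)$ exactly as described. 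The main obstacle is thus not these counts but the bijection itself, i.e.\@ the fact that the intervals partition $\cS_{G}$, which is where the real combinatorics of the chord diagram $C_{Q}$ resides and which I would import wholesale from \cite{Champanerkar2007aa}.
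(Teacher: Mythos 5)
Your treatment of the bijection is exactly the paper's: the lemma is stated there without proof precisely because it is a compilation of results imported from \cite{Champanerkar2007aa}, so decomposing $\cS_{G}$ into the intervals $[\cD(Q),\,\cD(Q)\cup\cI(Q)\cup\cE(Q)]$ and citing the chord-diagram analysis for the fact that these intervals partition $\cS_{G}$ is the intended route, and your reduction of the first half of the first bullet to ``contract each component of $\cD(Q)$ to a vertex'' is fine.

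The gap is in your self-contained argument for the face formula. You assert that every internally live edge joins two \emph{distinct} components of $\cD(Q)$ and therefore behaves like a bridge, lowering $f$ and $k$ by one each. That premise is false, and it cannot be repaired: if it held, $G_{Q}$ would always be a forest, every $W\subset G_{Q}$ would have vanishing nullity, and the factor $T(G_{Q};x,1+yz^{2})$ in the quasi-tree expansion of \cite{Champanerkar2007aa} would never see its second variable. Concretely, take the one-vertex orientable ribbon graph with two interleaved loops $e_{1}\prec e_{2}$ and the quasi-tree $Q=\{e_{1},e_{2}\}$: there $\cD(Q)=\{e_{2}\}$ and $\cI(Q)=\{e_{1}\}$, so $e_{1}$ is internally live but is a \emph{loop} of $G_{Q}$; adjoining it to $\cD(Q)$ drops $f$ from $2$ to $1$ while $k$ stays equal to $1$ and the genus increases instead. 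The statement you actually need is that the two attaching arcs of an internally live edge lie on distinct \emph{boundary} components of the intermediate subgraph (and those of an externally live edge on a single boundary component), which is strictly stronger than any assertion about connected components and is exactly the chord-diagram content of \cite{Champanerkar2007aa} that you proposed to import only for the bijection. Your Euler-relation cross-check cannot close this gap either, since $v-e+f=2(k-g)$ does not distinguish the transition $(\Delta f,\Delta k,\Delta g)=(-1,-1,0)$ from $(-1,0,+1)$. The fix is simply to cite (or reprove from the chord diagram $C_{Q}$) the boundary-component statements for live edges, on the same footing as the bijection, rather than deriving them from a bridge/loop dichotomy on $\cD(Q)$.
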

We now state and prove the quasi-tree expansion of the multivariate signed polynomial $Z$:
\begin{prop}
  Let $G$ be a signed connected orientable ribbon graph. With weights $\mathbf{\beta}$ given by \eqref{eq:weightChoice}, we have:
  \begin{align}
    Z(G;q,\balpha,c)=&\Big(\prod_{e\in E_{-}(G)}q^{-1/2}\alpha_{e}\Big)\hat{Z}(G;q,\mathbf{\beta},c),\nonumber\\
    \hat{Z}(G;q,\mathbf{\beta},c)=&\sum_{Q\in\cQ_{G}}\Big(\prod_{e\in\cD(Q)}\beta_{e}\Big)c^{f(\cD(Q))}\Big(\prod_{e\in\cE(Q)}(1+c\beta_{e})\Big)\,Z_{T}(G_{Q};q,\mathbf{\beta}/c)\label{eq:QTZ}
  \end{align}
  where $Z_{T}$ is the multivariate Tutte polynomial defined in equation \eqref{eq:MultiTutte}.
\end{prop}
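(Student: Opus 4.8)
The plan is to treat the two displayed equations separately. The first equation, relating $Z$ to $\hat{Z}$, is nothing but \eqref{eq:ZZhat}, which has already been established with the choice of weights \eqref{eq:weightChoice}, so I would simply invoke it. All the genuine work lies in the quasi-tree expansion of $\hat{Z}$, and for that the key input is Lemma \ref{lem:QTSteps}, which packages the needed topological facts into clean combinatorial identities.

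First I would start from the definition \eqref{eq:MBRDef} of $\hat{Z}$ as a sum over spanning subgraphs $F$ of $G$. Using the bijection of Lemma \ref{lem:QTSteps}, I reindex this sum by pairs $(Q,S)$, where $Q$ ranges over the quasi-trees $\cQ_{G}$ and $S$ ranges over subsets of $\cI(Q)\cup\cE(Q)$, so that $E(F)=\cD(Q)\cup S$. Writing $S=S_{1}\cup S_{2}$ with $S_{1}\subset\cI(Q)$ and $S_{2}\subset\cE(Q)$, the edge-product factorizes as
\begin{align}
  \prod_{e\in E(F)}\beta_{e}=\Big(\prod_{e\in\cD(Q)}\beta_{e}\Big)\Big(\prod_{e\in S_{1}}\beta_{e}\Big)\Big(\prod_{e\in S_{2}}\beta_{e}\Big),\nonumber
\end{align}
while the two identities of the lemma give $k(F)=k(W)$, with $W$ the spanning subgraph of $G_{Q}$ carrying edge-set $S_{1}$, and $f(F)=f(\cD(Q))-|S_{1}|+|S_{2}|$.

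Substituting these, the summand splits into a $Q$-dependent prefactor $\big(\prod_{e\in\cD(Q)}\beta_{e}\big)c^{f(\cD(Q))}$ times two independent sums over $S_{1}$ and $S_{2}$. The sum over $S_{2}\subset\cE(Q)$ is a product over external live edges, each contributing $1$ when absent or $\beta_{e}c$ when present, which yields $\prod_{e\in\cE(Q)}(1+c\beta_{e})$. The sum over $S_{1}\subset\cI(Q)$ collects $q^{k(W)}\prod_{e\in S_{1}}(\beta_{e}/c)$; since $G_{Q}$ has vertex-set the components of $\cD(Q)$ and edge-set $\cI(Q)$, and $k(W)$ counts the components of $W$ inside $G_{Q}$, this sum is precisely $Z_{T}(G_{Q};q,\mathbf{\beta}/c)$ by the definition \eqref{eq:MultiTutte}. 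Assembling the three pieces reproduces \eqref{eq:QTZ}.

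Because Lemma \ref{lem:QTSteps} already carries the topological content, the argument is essentially reindexing and factorization. The only point demanding care is the exact bookkeeping of the powers of $c$: the $c^{-|S_{1}|}$ coming from $f(F)=f(\cD)-|S_{1}|+|S_{2}|$ must combine with $\prod_{e\in S_{1}}\beta_{e}$ to produce exactly the rescaled weights $\beta_{e}/c$ entering the Tutte polynomial, while the $c^{+|S_{2}|}$ pairs with the external edges, and one must check that $k(W)$ genuinely matches the component count of $W$ in $G_{Q}$ rather than in $G$ itself.
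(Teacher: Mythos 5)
Your proposal is correct and follows essentially the same route as the paper: the first identity is just \eqref{eq:ZZhat}, and the quasi-tree expansion is obtained by reindexing the subgraph sum via Lemma \ref{lem:QTSteps}, splitting $S=S_{1}\cup S_{2}$, using $k(\cD\cup S)=k(W)$ and $f(\cD\cup S)=f(\cD)-|S_{1}|+|S_{2}|$, and factorising the two independent sums into $\prod_{e\in\cE(Q)}(1+c\beta_{e})$ and $Z_{T}(G_{Q};q,\mathbf{\beta}/c)$. This is exactly the paper's computation.
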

\begin{proof}
  \begin{align}
    \hat{Z}(G;q,\mathbf{\beta},c)=&\sum_{F\subset G}q^{k(F)}\Big(\prod_{e\in E(F)}\beta_{e}\Big)c^{f(F)}\\
    =&\sum_{Q\in\cQ_{G}}\sum_{S_{1}\subset\cI(Q)}\sum_{S_{2}\subset\cE(Q)}q^{k(\cD(Q)\cup S_{1}\cup S_{2})}\Big(\prod_{e\in \cD(Q)\cup S_{1}\cup S_{2}}\beta_{e}\Big)c^{f(\cD(Q)\cup S_{1}\cup S_{2})}\\
    =&\sum_{Q\in\cQ_{G}}\Big(\prod_{e\in\cD(Q)}\beta_{e}\Big)c^{f(\cD(Q))}\sum_{S_{2}\subset\cE(Q)}\Big(\prod_{e\in S_{2}}c\beta_{e}\Big) \sum_{S_{1}\subset\cI(Q)}q^{k(\cD(Q)\cup S_{1})}\Big(\prod_{e\in S_{1}}\beta_{e}/c\Big)\\
    =&\sum_{Q\in\cQ_{G}}\Big(\prod_{e\in\cD(Q)}\beta_{e}\Big)c^{f(\cD(Q))}\Big(\prod_{e\in\cE(Q)}1+c\beta_{e}\Big) \sum_{W\subset G_{Q}}q^{k(W)}\Big(\prod_{e\in E(W)}\beta_{e}/c\Big)
  \end{align}
  which proves the proposition.
\end{proof}

\section{Partial duality}
\label{sec:generalised-duality-1}

We now state and prove our main theorem namely the invariance of the multivariate signed \BRp{} under partial duality.
\begin{thm}
  \label{thm:PartialDuality}
  Let $G$ be a ribbon graph. For any subset $E'\subset E(G)$, the multivariate signed \BRp{} (\ref{eq:SMBRDef}) at $q=1$ is invariant under the partial duality with respect to $E'$:
  \begin{align}
    \label{eq:PartialDuality}
    Z(G;1,\balpha,c)=&Z(G^{E'};1,\balpha,c).
  \end{align}
\end{thm}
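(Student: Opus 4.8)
The plan is to reduce \eqref{eq:PartialDuality} to the case of a single edge and then to exhibit a weight- and face-preserving bijection between the spanning subgraphs of $G$ and those of $G^{E'}$. First I would use the composition and involution properties of Lemma \ref{lem:SimpleProp}, namely $G^{E'\cup\{e\}}=(G^{E'})^{\{e\}}$ and $(G^{E'})^{E'}=G$, to argue by induction on $\card E'$ that it suffices to prove the statement when $E'=\{e\}$ is a single edge: an induction step dualizes one edge at a time, each time applying the single-edge case to the graph already partially dualized.

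For the single-edge case I would set up the subgraph bijection coming from the edge bijection $\phi:E(G)\to E(G^{\{e\}})$: to a subgraph $F\subseteq G$ with edge-set $A$ I associate the subgraph $F'\subseteq G^{\{e\}}$ with edge-set $A\triangle\{e\}$, so that $e$ is removed from $A$ when present and added when absent. That this is the correct pairing is already forced by the contraction/deletion identities $G^{\{e\}}-e=G/e$ (Definition \ref{def:Contraction}) and $G^{\{e\}}/e=G-e$ (the latter from the involution): subgraphs of $G$ containing $e$ correspond to subgraphs of $G/e$, hence to subgraphs of $G^{\{e\}}$ avoiding $e$, and symmetrically for those not containing $e$.

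Next I would check that, \emph{at $q=1$}, the summand of \eqref{eq:SMBRDef} attached to $A$ equals the one attached to $A\triangle\{e\}$. The $\balpha$-weight is the easy part: for every edge $e'\neq e$ both the sign and the membership in the subgraph are unchanged, so these factors agree; for $e$ itself the sign is flipped by the rule $\veps_{G^{\{e\}}}(e)=-\veps_{G}(e)$ while membership is flipped by the symmetric difference, and one checks directly (say $e$ positive in $G$) that $e$ contributes $\alpha_{e}$ on the $G$-side exactly when $e\in A$ and on the $G^{\{e\}}$-side exactly when $e\notin A\triangle\{e\}$, which is the same condition. It is precisely here that $q=1$ is indispensable: neither $k(F)$ nor $s(F)$ survives the pairing (the sign flip changes $s$, and dualization changes the component counts of subgraphs), so the factor $q^{k(F)+s(F)}$ must be trivialized.

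The remaining ingredient is the topological identity $f(F)=f(F')$, i.e.\ invariance of the number of boundary components under $A\mapsto A\triangle\{e\}$, and I expect this to be the main obstacle. One cannot simply invoke Proposition \ref{prop:DelCont}, since those relations are unavailable for orientable nontrivial loops, and dualizing a single edge can turn an ordinary edge into such a loop, so no two-sided contraction--deletion argument covers all cases uniformly. I would instead argue directly from the construction of $G^{\{e\}}$: its vertices are the boundary components of the spanning subgraph on $\{e\}$, and passing from $A$ to $A\triangle\{e\}$ is a local surgery along the ribbon $e$ that trades ``$e$ present, traversed along its two long sides'' for ``$e$ absent, with the incident vertex boundaries reconnected''. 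Tracing a boundary curve through the two places where $e$ meets the vertices then shows this reconnection neither creates nor destroys boundary components, in each of the orientable and non-orientable local configurations. Granting $f(F)=f(F')$, the two summands coincide, the bijection is weight-preserving at $q=1$, and summing yields $Z(G;1,\balpha,c)=Z(G^{\{e\}};1,\balpha,c)$, which the induction promotes to an arbitrary $E'$.
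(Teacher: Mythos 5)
Your proposal is correct and follows essentially the same route as the paper: reduce to a single edge via Lemma \ref{lem:SimpleProp}, pair $F$ with the subgraph of the partial dual whose edge-set is the symmetric difference $E(F)\triangle E'$, match the $\balpha$-weights edge by edge using the sign flip $\veps_{G^{E'}}(e)=-\veps_{G}(e)$, and let $q=1$ absorb the uncontrolled change in $k(F)+s(F)$. The one step you single out as the main obstacle, $f(F)=f(F')$, is exactly the point the paper disposes of with the phrase ``by construction'' (the boundary of $F$ is unchanged by the ribbon surgery defining the dual), so your proposed boundary-tracing argument is just a more explicit justification of the same fact.
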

\begin{rem}
  The duality transformation of the signed \BRp{} \cite{Chmutov2007aa} is a consequence of our multivariate version.
\end{rem}
\begin{proof}
  We follow the steps of the proof given by S.~Chmutov in \cite{Chmutov2007aa} for the signed \BRp{} (\ref{eq:BRPolyDef}). Let us recall that
  \begin{align}
    Z(G;q,\mathbf{\alpha},c)=&\sum_{F\subseteq G}q^{k(F)+s(F)}\Big(\prod_{\substack{e\in E_{+}(F)\\\cup E_{-}(\bar{F})}}\alpha_{e}\Big)c^{f(F)}\fide\sum_{F\subseteq G}M(F).
  \end{align}
  To any spanning subgraph $F\subseteq G$, we associate a spanning subgraph $F'\subseteq G^{E'}$ the edge-set of which is $E(F')\defi (E'\cup E(F))-(E'\cap E(F))$. This correspondence is one-to-one so that it is enough to prove that $\left.M(F)\rabs_{q=1}=\left.M(F')\rabs_{q=1}$. Moreover, thanks to lemma \ref{lem:SimpleProp}, it is sufficient to consider the case when $E'$ is reduced to a single edge $e'$. We can also assume that $e'\in E(F)$ (so that $e'\notin E(F')$) because if not, the roles of $G$ and $G^{\{e'\}}$ are simply interchanged.

We now compare the parameters $k,s$ and $f$ for the subgraphs $F$ and $F'$. By construction, $f(F)=f(F')$. Let us first assume that $e'$ is positive in $F$. By assumption, $e'\in E(F)$ and $F'=(V(G^{\{e'\}}),E(F)-\{e'\})$. Then $s(F)=s(F')+1/2$, $E_{+}(F)=E_{+}(F')\cup\{e'\}$ and $E_{-}(\Fb)=E_{-}(\bar{F'})-\{e'\}$. In the case of $e'$ being negative, $s(F)=s(F')+1/2$, $E_{+}(F)=E_{+}(F')$ and $E_{-}(\Fb)=E_{-}(\bar{F'})$. We then have
\begin{align}
  q^{k(F)+s(F)}\Big(\prod_{\substack{e\in E_{+}(F)\\\cup E_{-}(\bar{F})}}\alpha_{e}\Big)c^{f(F)}=&q^{k(F')+s(F')+1/2}\Big(\prod_{\substack{e\in E_{+}(F')\\\cup E_{-}(\bar{F'})}}\alpha_{e}\Big)c^{f(F')}
\end{align}
which proves the theorem.
\end{proof}

Thanks to theorem \ref{thm:PartialDuality}, we can prove a \emph{weak} contraction/deletion reduction relation (meaning only true for $q=1$) for the orientable nontrivial edges.
\begin{lemma}
  \label{lem:CDNONTEdge}
  Let $G$ be a ribbon graph and $e\in E(G)$ a nontrivial orientable loop. Then
  \begin{align}
    Z(G;1,\balpha,c)=&
    \begin{cases}
      \alpha_{e}Z(G/e;1,\balpha_{e},c)+Z(G-e;1,\balpha_{e},c)&\text{if e is positive,}\\
      Z(G/e;1,\balpha_{e},c)+\alpha_{e}Z(G-e;1,\balpha_{e},c)&\text{if e is negative.}
    \end{cases}
  \end{align}
\end{lemma}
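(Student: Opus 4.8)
The plan is to remove the single obstruction to proposition \ref{prop:DelCont} — namely that $e$ is an orientable loop — by passing to the partial dual, where $e$ ceases to be such a loop, and then to transport the resulting relation back with theorem \ref{thm:PartialDuality}. Set $H\defi G^{\{e\}}$. First I would record how contraction and deletion of $e$ behave under this duality. The defining identity $G/e=G^{\{e\}}-e$ of definition \ref{def:Contraction} reads $H-e=G/e$, and applying that identity to $H$ together with the involutivity $(G^{\{e\}})^{\{e\}}=G$ of lemma \ref{lem:SimpleProp} gives $H/e=H^{\{e\}}-e=G-e$. Thus passing to $G^{\{e\}}$ simply swaps the contraction and the deletion of $e$.

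Next I would invoke the geometric fact that the partial dual of an orientable loop is not an orientable loop: cutting $G$ open along the single orientable band $e$ turns it into an ordinary non-loop edge of $H$ (in particular, not an orientable loop), while the partial-duality convention flips its sign. This is exactly what makes the argument run, because the reduction relations \eqref{eq:DelContOrdBridNOP} and \eqref{eq:DelContOrdBridNON} hold for every edge that is not an orientable loop — their proof treats ordinary edges, bridges and non-orientable loops on the same footing. I expect this type change to be the main obstacle; verifying it carefully (from the combinatorial representation of section \ref{sec:ribbon-graphs}, or by tracking the boundary components of $F_{\{e\}}$) is the only nonformal step, and once it is granted the remainder is bookkeeping.

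Finally I would combine the two. By theorem \ref{thm:PartialDuality}, $Z(G;1,\balpha,c)=Z(H;1,\balpha,c)$, the weights being carried along the canonical bijection $\phi$. If $e$ is positive in $G$ it is negative and non-loop in $H$, so evaluating \eqref{eq:DelContOrdBridNON} for $H$ at $q=1$, where $q^{\pm 1/2}=1$, and substituting $H/e=G-e$ and $H-e=G/e$ gives
\begin{align*}
  Z(G;1,\balpha,c)=Z(G-e;1,\balpha_{e},c)+\alpha_{e}Z(G/e;1,\balpha_{e},c),
\end{align*}
which is the positive case. If $e$ is negative in $G$ it is positive and non-loop in $H$, and evaluating \eqref{eq:DelContOrdBridNOP} for $H$ at $q=1$ with the same substitutions gives
\begin{align*}
  Z(G;1,\balpha,c)=Z(G/e;1,\balpha_{e},c)+\alpha_{e}Z(G-e;1,\balpha_{e},c),
\end{align*}
which is the negative case. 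The asymmetric factors $q^{\pm 1/2}$ that distinguish the two signs in proposition \ref{prop:DelCont} collapse at $q=1$, which is precisely why the two sign cases come out in the symmetric form stated. The restriction to $q=1$ is forced throughout, since that is where both theorem \ref{thm:PartialDuality} and the cancellation of the $q^{\pm 1/2}$ factors are simultaneously available.
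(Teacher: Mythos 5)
Your proof is correct and follows essentially the same route as the paper's: invoke theorem \ref{thm:PartialDuality} to replace $G$ by $G^{\{e\}}$, observe that the nontrivial orientable loop $e$ becomes a non-loop edge of opposite sign there, apply the appropriate case of proposition \ref{prop:DelCont}, and translate back via $G/e=G^{\{e\}}-e$ and $G-e=G^{\{e\}}/e$. Your explicit derivation of the latter identity from the involutivity in lemma \ref{lem:SimpleProp} is a detail the paper leaves implicit, but the argument is the same.
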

\begin{proof}
  From the theorem, $Z(G;1,\alpha,c)=Z(G^{\{e\}};1,\alpha,c)$. The edge $e$ being nontrivial in $G$, it is ordinary in $G^{\{e\}}$. We can then apply the proposition \ref{prop:DelCont} to $Z(G^{\{e\}})$. If $e$ is positive in $G$, it is negative in $G^{\{e\}}$ and we use equation (\ref{eq:DelContOrdBridNON}):
  \begin{align}
    Z(G;1,\balpha,c)=&Z(G^{\{e\}};1,\balpha,c)=Z(G^{\{e\}}/e;1,\balpha_{e},c)+\alpha_{e}Z(G^{\{e\}}-e;1,\balpha_{e},c)\\
    =&Z(G-e;1,\balpha_{e},c)+\alpha_{e}Z(G/e;1,\balpha_{e},c).
  \end{align}
  If $e$ is negative in $G$, we use equation (\ref{eq:DelContOrdBridNOP}) instead:
  \begin{align}
    Z(G;1,\balpha,c)=&Z(G^{\{e\}};1,\balpha,c)=\alpha_{e}Z(G^{\{e\}}/e;1,\balpha_{e},c)+Z(G^{\{e\}}-e;1,\balpha_{e},c)\\
    =&\alpha_{e}Z(G-e;1,\balpha_{e},c)+Z(G/e;1,\balpha_{e},c).
  \end{align}
\end{proof}

\section{Natural duality}
\label{sec:natural-duality}

Let $G$ be an \emph{unsigned} ribbon graph. The usual dual $G^{\star}$ is then equivalent to $G^{E(G)}$. Let $R(G;x+1,y,z,w)$ be the \BRp{} (\ref{eq:BRDef}) for unsigned ribbon graphs. In \cite{Ellis-Monaghan2009aa,Moffatt2008ab}, a duality relation has been proved for $R$, namely
\begin{align}
  x^{g(G)}R(G;x+1,y,1/\sqrt{xy},1)=&y^{g(G)}R(G^{\star};y+1,x,1/\sqrt{xy},1).\label{eq:BRDuality}
\end{align}
This duality takes place on the surface $xyz^{2}=1$ which is the equivalent of our $q=1$. It is then a natural question as to whether the partial duality can reproduce this result. This has been addressed in \cite{Chmutov2007aa}. Taking into account the fact that the signed polynomial (\ref{eq:BRPolyDef}) reduces to the unsigned \BRp{} for graphs with only positive edges, we can use the (not so) partial duality with $E'=E(G)$. But remember that during this duality process, all the signs are changed. This mean that, starting with positive edges, our dual $G^{E}$ has only negative edges. So, to go to the \BRp{} for $G^{\star}$ we have to flip all the signs once more. Fortunately, one can prove a simple formula for that. The \emph{natural duality} from $G$ to $G^{\star}$ is then defined as the two following steps: a duality with respect to $E(G)$ and a change of the sign function $\veps_{G^{\star}}\defi -\veps_{G^{E}}$.\\

Here we study the behaviour of our multivariate polynomial $Z$ under the natural duality.

\subsection{Natural duality for the \BRp}
\label{sec:natural-duality-brp}

\begin{prop}[Flip of a sign]\label{prop:FlipSign}
  Let $G$ be a ribbon graph with sign function $\veps$, let $e_{i}\in E(G)$ and let $G_{-e_{i}}$ be the same ribbon graph but with a sign function $\veps_{-e_{i}}$ given by: $\forall e\in E(G_{-e_{i}})-\{e_{i}\},\,\veps_{-e_{i}}(e)=\veps(e)$ and $\veps_{-e_{i}}(e_{i})=-\veps(e_{i})$. Then
  \begin{align}
    Z(G_{-e_{i}};q,\mathbf{\alpha},c)=&\frac{\alpha_{e_{i}}}{\sqrt q}\,Z(G;q,\balpha^{i},c)
  \end{align}
  with $\balpha^{i}\defi\{\alpha_{1},\dots,\alpha_{i-1},\frac{q}{\alpha_{e_{i}}},\alpha_{i+1},\dots\}$.
\end{prop}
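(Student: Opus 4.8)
The plan is to prove the identity summand by summand. Since $G$ and $G_{-e_{i}}$ have the same underlying unsigned ribbon graph, they share the same set of spanning subgraphs, and for every $F\subseteq G$ the purely topological quantities $k(F)$ and $f(F)$ are identical in the two graphs. Writing $M_{G}(F;\balpha)\defi q^{k(F)+s(F)}\big(\prod_{e\in E_{+}(F)\cup E_{-}(\bar F)}\alpha_{e}\big)c^{f(F)}$ for the summand of \eqref{eq:SMBRDef}, it therefore suffices to establish the termwise relation $M_{G_{-e_{i}}}(F;\balpha)=\tfrac{\alpha_{e_{i}}}{\sqrt q}\,M_{G}(F;\balpha^{i})$ for each $F$; summing over the common index set then yields the proposition.

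First I would observe that flipping the sign of $e_{i}$ leaves every edge $e\neq e_{i}$ untouched, both as regards its sign and its weight (indeed $\alpha_{e}=\alpha^{i}_{e}$ for $e\neq e_{i}$). Hence all factors $q^{k(F)}$, $c^{f(F)}$ and $\alpha_{e}$ with $e\neq e_{i}$ are common to the two sides and cancel, so the whole comparison reduces to the contribution of $e_{i}$ alone: namely (a) whether $e_{i}$ lies in $E_{+}(F)\cup E_{-}(\bar F)$, and (b) the value of $s(F)$. In the spirit of Table \ref{tab:SignDepProp}, changing $\veps(e_{i})$ shifts $s(F)$ by $\pm\tfrac12$ and simultaneously toggles the membership of $e_{i}$ in $E_{+}(F)\cup E_{-}(\bar F)$.

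The key step is then a two-case dichotomy, according to whether $e_{i}$ is \emph{weighted} in $M_{G}(F;\cdot)$, that is whether $e_{i}\in E_{+}(F)\cup E_{-}(\bar F)$ (equivalently, $e_{i}$ positive and in $F$, or $e_{i}$ negative and in $\bar F$). If it is, then in $G$ with weights $\balpha^{i}$ the edge $e_{i}$ contributes the factor $q/\alpha_{e_{i}}$, whereas in $G_{-e_{i}}$ its sign is reversed so that it drops out of the product, while $s$ increases by $\tfrac12$; both sides then equal $q^{s_{G}(F)+1/2}$ times the common factors, since $\tfrac{\alpha_{e_{i}}}{\sqrt q}\cdot q^{s_{G}(F)}\cdot\tfrac{q}{\alpha_{e_{i}}}=q^{s_{G}(F)+1/2}$. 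If $e_{i}$ is not weighted in $M_{G}(F;\cdot)$, then it contributes nothing on the $G$-side, the flip makes it weighted in $G_{-e_{i}}$ with factor $\alpha_{e_{i}}$, and $s$ decreases by $\tfrac12$; both sides then equal $\alpha_{e_{i}}\,q^{s_{G}(F)-1/2}$ times the common factors. In either case the termwise identity holds, and summing over $F$ proves the proposition.

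I expect the only real obstacle to be the bookkeeping rather than any conceptual difficulty: one must check that the \emph{single} global prefactor $\alpha_{e_{i}}/\sqrt q$, together with the substitution $\alpha_{e_{i}}\mapsto q/\alpha_{e_{i}}$, compensates simultaneously both the $q^{\pm1/2}$ shift of $s(F)$ and the appearance or disappearance of the $e_{i}$-weight, and that this matching is uniform over all configurations of $F$. The computation sketched above shows that the two effects cancel exactly in every case, which is precisely the content of the statement; iterating it over all edges of $G$ also recovers the change-of-sign-function result of corollary \ref{cor:ChangeSignFct}.
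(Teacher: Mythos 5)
Your proof is correct and follows essentially the same route as the paper's: a termwise comparison of the summands over the common set of spanning subgraphs, tracking how the flip of $\veps(e_{i})$ shifts $s(F)$ by $\pm\tfrac12$ and toggles the membership of $e_{i}$ in $E_{+}(F)\cup E_{-}(\bar{F})$, so that the prefactor $\alpha_{e_{i}}/\sqrt q$ together with the substitution $\alpha_{e_{i}}\mapsto q/\alpha_{e_{i}}$ compensates both effects. The only organisational difference is that you split cases by whether $e_{i}$ is weighted in $M_{G}(F;\cdot)$, which handles both signs of $e_{i}$ uniformly, whereas the paper fixes one sign, splits on $e_{i}\in E(F)$ versus $e_{i}\in E(\bar{F})$, and deduces the other sign from the involution $G=(G_{-e_{i}})_{-e_{i}}$.
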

\begin{proof}
  Let $e_{i}\in E(G)$ such that $\veps_{-e_{i}}(e_{i})=1$. We have
  \begin{align}
    Z(G_{-e_{i}};q,\balpha,c)=&\sum_{F_{-e_{i}}\subseteq G_{-e_{i}}}q^{k(F_{-e_{i}})+s(F_{-e_{i}})}\Big(\prod_{e\in E_{+}(F_{-e_{i}})}\alpha_{e}\prod_{e\in E_{-}(\bar{F}_{-e_{i}})}\alpha_{e}\Big)c^{f(F_{-e_{i}})}\\
    \fide&\sum_{F_{-e_{i}}\subseteq G_{-e_{i}}} M(F_{-e_{i}},\balpha).
  \end{align}
  As usual, the sum is now divided into two parts corresponding, respectively, to the subgraphs which contain $e_{i}$ and to those which do not. So let $F_{-e_{i}}$ such that $e_{i}\in E(F_{-e_{i}})$. Then we have
  \begin{align}
    e_{-}(F_{-e_{i}})=&e_{-}(F)-1,\,E_{+}(F_{-e_{i}})=E_{+}(F)\cup\{e_{i}\},\, E_{-}(\bar{F}_{-e_{i}})=E_{-}(\bar{F})\text{ and}\nonumber\\
    Z(G_{-e_{i}};q,\balpha,c)=&\frac{\alpha_{e_{i}}}{\sqrt q}\sum_{F\tq e_{i}\in E(F)}M(F,\balpha)+\sum_{F_{-e_{i}}\tq e_{i}\notin E_{F_{-e_{i}}}}M(F_{-e_{i}},\balpha).
  \end{align}
  Let now $F_{-e_{i}}$ such that $e_{i}\in E(\bar{F}_{-e_{i}})$. In this case,
  \begin{align}
    e_{-}(\bar{F}_{-e_{i}})=&e_{-}(\bar{F})-1,\,E_{+}(F_{-e_{i}})=E_{+}(F),\,E_{-}(\bar{F}_{-e_{i}})=E_{-}(\bar{F})-\{e_{i}\}\text{ and}\nonumber\\
    Z(G_{-e_{i}};q,\balpha,c)=&\frac{\alpha_{e_{i}}}{\sqrt q}\sum_{F\tq e_{i}\in E(F)}M(F,\balpha)+\frac{\alpha_{e_{i}}}{\sqrt q}\sum_{F\tq e_{i}\notin E(F)}M(F,\balpha^{i}).
  \end{align}
In the first sum, the variable $\alpha_{e_{i}}$ never appears so that we can replace $\balpha$ by $\balpha^{i}$ which proves the proposition in the case of a positive edge. The proof in the case of a negative edge follows from $G=(G_{-e_{i}})_{-e_{i}}$.
\end{proof}

\begin{cor}[Change of the sign function]\label{cor:ChangeSignFct}
  Let $G_\veps$ be a ribbon graph with the sign function $\veps$ and $G_{-\veps}$ be the same ribbon graph only with the sign function $-\veps$. Then
  \begin{align}
    Z(G_{-\veps};q,\mathbf{\alpha},c)=&\Big(\prod_{e\in E(G)}\frac{\alpha_{e}}{\sqrt q}\Big)\,Z(G_{\veps};q,\frac{q}{\mathbf{\alpha}},c).
  \end{align}
\end{cor}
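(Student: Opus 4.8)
The plan is to obtain the change-of-sign-function formula by iterating the single-edge flip of Proposition \ref{prop:FlipSign}. Since $-\veps$ differs from $\veps$ by flipping the sign of \emph{every} edge, I would apply \ref{prop:FlipSign} once for each edge $e_i \in E(G)$, accumulating one factor $\alpha_{e_i}/\sqrt{q}$ and one weight-inversion $\alpha_{e_i}\mapsto q/\alpha_{e_i}$ at each step. The key point to verify is that these single-edge flips commute and compose cleanly, so that flipping all edges in succession produces the overall prefactor $\prod_{e\in E(G)}\alpha_e/\sqrt{q}$ together with the simultaneous substitution $\balpha\mapsto q/\balpha$ (meaning each $\alpha_e$ replaced by $q/\alpha_e$).

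Concretely, I would set up an induction (or a finite product telescoping) over the edges of $G$. First enumerate $E(G)=\{e_1,\dots,e_{e(G)}\}$ and define intermediate sign functions $\veps^{(0)}=\veps$, with $\veps^{(j)}$ obtained from $\veps^{(j-1)}$ by flipping the sign of $e_j$; thus $\veps^{(e(G))}=-\veps$. Writing $G^{(j)}$ for the graph with sign function $\veps^{(j)}$, Proposition \ref{prop:FlipSign} applied to the edge $e_j$ gives
\begin{align}
  Z(G^{(j)};q,\balpha,c)=\frac{\alpha_{e_j}}{\sqrt q}\,Z(G^{(j-1)};q,\balpha^{j},c),
\end{align}
where $\balpha^{j}$ replaces $\alpha_{e_j}$ by $q/\alpha_{e_j}$. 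The main care needed here is bookkeeping of which weight has already been inverted: after the first flip, subsequent applications of the proposition act on a graph whose weight set has already had some entries inverted, so I must track that the prefactor pulled out at stage $j$ is the \emph{current} weight of $e_j$, which is still $\alpha_{e_j}$ precisely because $e_j$ has not yet been flipped in earlier stages (earlier stages flipped $e_1,\dots,e_{j-1}$). This is exactly the content that makes the telescoping work, and it is the step I expect to require the most attention.

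Chaining these equalities from $j=e(G)$ down to $j=1$ yields
\begin{align}
  Z(G_{-\veps};q,\balpha,c)=\Big(\prod_{e\in E(G)}\frac{\alpha_e}{\sqrt q}\Big)\,Z(G_{\veps};q,\tfrac{q}{\balpha},c),
\end{align}
since each edge contributes exactly one factor $\alpha_e/\sqrt q$ and exactly one inversion $\alpha_e\mapsto q/\alpha_e$, and all edges are flipped once. The main obstacle, as noted, is confirming the consistency of the iterated substitution (that no weight is inverted twice and that the extracted prefactor at each stage is the original $\alpha_{e_j}$ rather than an already-inverted value); once that is checked, the corollary follows immediately by composing $e(G)$ instances of Proposition \ref{prop:FlipSign}.
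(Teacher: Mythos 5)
Your proposal is correct and is exactly the paper's argument: the paper proves the corollary in one line by applying Proposition \ref{prop:FlipSign} to all the edges of $G_{\veps}$, which is precisely the iterated single-edge flip you describe. Your additional bookkeeping (that the prefactor extracted at each stage is the not-yet-inverted weight $\alpha_{e_j}$, so the telescoping composes cleanly) is the right detail to check and is consistent with the paper.
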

It is simply the proposition \ref{prop:FlipSign} applied to all the edges of $G_{\veps}$.

\begin{prop}[Natural duality]
  Let $G^{\star}$ be the natural dual of a ribbon graph $G$. Then
  \begin{align}
    Z(G;1,\mathbf{\alpha},c)=&\Big(\prod_{e\in E(G)}\alpha_{e}\Big)\,Z(G^{\star};1,\mathbf{\alpha}^{-1},c).
  \end{align}
\end{prop}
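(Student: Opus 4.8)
The plan is to realise the natural duality as the composition of the two transformations already analysed above: a partial duality with respect to the full edge set $E(G)$, followed by a global flip of the sign function. By the very definition of the natural dual given at the start of this section, $G^{\star}$ is the ribbon graph $G^{E(G)}$ equipped with the opposite sign function $\veps_{G^{\star}}\defi-\veps_{G^{E}}$. Since the partial duality with respect to $E(G)$ already flips every sign once, $G^{E(G)}$ carries the signs opposite to $G$, and the extra flip restores them, so that $G^{\star}$ and $G$ share the same sign function while $G^{E(G)}$ is precisely $G^{\star}$ with every edge sign flipped.

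First I would invoke Theorem \ref{thm:PartialDuality} at $q=1$ with $E'=E(G)$, which gives $Z(G;1,\balpha,c)=Z(G^{E(G)};1,\balpha,c)$ with no change in the weights. This reduces the task to comparing $Z(G^{E(G)})$ with $Z(G^{\star})$, two copies of the same underlying ribbon graph carrying opposite signs.

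Next I would apply Corollary \ref{cor:ChangeSignFct} to pass between these two sign functions. Taking $G^{\star}$ in the role of $G_{\veps}$ and $G^{E(G)}$ in the role of its sign-flipped version $G_{-\veps}$, the corollary reads $Z(G^{E(G)};q,\balpha,c)=\big(\prod_{e\in E(G)}\alpha_{e}/\sqrt q\big)\,Z(G^{\star};q,q/\balpha,c)$. Specialising to $q=1$, the factors $\sqrt q$ disappear and $q/\balpha$ becomes $\balpha^{-1}$, yielding $Z(G^{E(G)};1,\balpha,c)=\big(\prod_{e\in E(G)}\alpha_{e}\big)\,Z(G^{\star};1,\balpha^{-1},c)$. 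Chaining this with the identity from Theorem \ref{thm:PartialDuality} produces the claimed formula.

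The only delicate point is the bookkeeping of the sign functions: one must keep in mind that the partial duality with respect to $E(G)$ flips every sign once, so that $G^{E(G)}$ has the signs opposite to $G$ and identical to the sign-flip of $G^{\star}$, and consequently the substitution of which graph plays the role of $G_{\veps}$ versus $G_{-\veps}$ in Corollary \ref{cor:ChangeSignFct} must be made in the correct direction. There is no genuine analytic difficulty here: once the decomposition into partial duality followed by a sign flip is set up, the result follows by a direct substitution at $q=1$, where both $\sqrt q$ and the distinction between $q/\balpha$ and $\balpha^{-1}$ become trivial.
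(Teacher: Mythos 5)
Your proposal is correct and follows exactly the paper's own argument: the paper proves this proposition by the one-line remark that it is a direct consequence of Theorem \ref{thm:PartialDuality} and Corollary \ref{cor:ChangeSignFct}, which is precisely the decomposition (partial duality with respect to $E(G)$ at $q=1$, then the global sign flip) that you carry out. Your careful bookkeeping of which graph plays the role of $G_{\veps}$ versus $G_{-\veps}$ is the only nontrivial point, and you handle it correctly.
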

It is a direct consequence of theorem \ref{thm:PartialDuality} and corollary \ref{cor:ChangeSignFct}.

\subsection{Duality for the multivariate Tutte polynomial}
\label{sec:dual-mult-tutte}

In \cite{Chmutov2007aa} it has been shown that the duality relation for the Tutte polynomial of connected planar graphs is a special case of the partial duality for the signed \BRp. Here, we prove that the same result holds in the multivariate case.\\

\noindent
Let us first recall that the multivariate \Tp{} is defined as follows:
\begin{defn}[Multivariate \Tp{} \cite{Traldi1989aa}]
  \begin{align}
    Z_{T}(G;q,\balpha)\defi&\sum_{F\subseteq G}q^{k(F)}\prod_{e\in E(F)}\alpha_{e}.\label{eq:MultiTutteDef}
  \end{align}
\end{defn}
It obeys the following duality relation:
\begin{prop}[Duality for the multivariate \Tp{} \cite{Sokal2005aa}]\label{prop:MultiTutteDuality}
  Let $G$ be a connected planar graph and $G^{\star}$ its dual. The following relation holds:
  \begin{align}
    Z_{T}(G;q,\balpha)=&q^{1-v(G^{\star})}\Big(\prod_{e\in E(G)}\alpha_{e}\Big)Z_{T}(G^{\star};q,q/\balpha).\label{eq:MultiTutteDuality}
  \end{align}
\end{prop}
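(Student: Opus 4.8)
The plan is to deduce the multivariate Tutte duality \eqref{eq:MultiTutteDuality} from the natural-duality transformation of $Z$ obtained above, by specialising the parameters so that the signed \BRp{} collapses onto the multivariate \Tp. Throughout I regard the connected planar graph $G$ as an all-positive ribbon graph; being planar it has genus zero, and so does every spanning subgraph $F\subseteq G$, since each component of $F$ still embeds in the plane. The first step is a \emph{conversion identity} between the two polynomials at $q=1$. For an all-positive ribbon graph $H$ of genus zero one has $s(F)=0$ and $E_{-}(\bar F)=\emptyset$ for every $F\subseteq H$, so the definition \eqref{eq:SMBRDef} reduces to $Z(H;1,\balpha,c)=\sum_{F\subseteq H}\big(\prod_{e\in E(F)}\alpha_e\big)c^{f(F)}$. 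I would then insert the Euler relation $f(F)=2k(F)-v(H)+e(F)$, valid for every genus-zero spanning subgraph, and factor the boundary-component weight to get
\[
  Z(H;1,\balpha,c)=c^{-v(H)}\,Z_T(H;c^2,c\balpha),
\]
where $c\balpha\defi\{c\,\alpha_e\}_{e\in E(H)}$ and $Z_T$ is the multivariate \Tp{} \eqref{eq:MultiTutteDef}.

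Next I would set $c=\sqrt q$ and feed $Z$ the rescaled weights $\balpha/\sqrt q\defi\{\alpha_e/\sqrt q\}$, so that the induced \Tp{} weights $c\cdot(\alpha_e/\sqrt q)=\alpha_e$ match those of the statement while $c^2=q$. The conversion identity applied to $G$ then reads
\[
  Z(G;1,\balpha/\sqrt q,\sqrt q)=q^{-v(G)/2}\,Z_T(G;q,\balpha).
\]
Applying the natural-duality proposition above to $G$ with the same weights gives
\[
  Z(G;1,\balpha/\sqrt q,\sqrt q)=\Big(\prod_{e\in E(G)}\tfrac{\alpha_e}{\sqrt q}\Big)\,Z\big(G^{\star};1,\sqrt q/\balpha,\sqrt q\big),
\]
with $\sqrt q/\balpha\defi\{\sqrt q/\alpha_e\}$. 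Since $G^{\star}$ is again connected, planar and all-positive (it carries the same sign function as $G$), the conversion identity applies to it as well; using $c\cdot(\sqrt q/\alpha_e)=q/\alpha_e$ this yields $Z(G^{\star};1,\sqrt q/\balpha,\sqrt q)=q^{-v(G^{\star})/2}\,Z_T(G^{\star};q,q/\balpha)$.

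Combining the three displays and cancelling $q^{-v(G)/2}$ leaves
\[
  Z_T(G;q,\balpha)=q^{\frac12\left(v(G)-e(G)-v(G^{\star})\right)}\Big(\prod_{e\in E(G)}\alpha_e\Big)Z_T(G^{\star};q,q/\balpha),
\]
so it only remains to evaluate the exponent. For a connected plane graph $v(G^{\star})$ is the number of faces of $G$, whence Euler's formula $v(G)-e(G)+v(G^{\star})=2$ turns the exponent into $1-v(G^{\star})$, which is exactly \eqref{eq:MultiTutteDuality}. The main obstacle — and the only point where genuine care is needed — is the Euler-characteristic bookkeeping: one must verify that $f(F)=2k(F)-v(H)+e(F)$ holds uniformly over all, possibly disconnected, genus-zero spanning subgraphs, and then track the half-integer powers of $q$ introduced by the two weight rescalings so that they combine, through $v(G^{\star})=2-v(G)+e(G)$, into the clean integer exponent $1-v(G^{\star})$.
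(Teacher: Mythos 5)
Your proof is correct and follows essentially the same route as the paper: both arguments reduce the statement to the natural duality of $Z$ at $q=1$ (theorem \ref{thm:PartialDuality} plus the sign flip of corollary \ref{cor:ChangeSignFct}) together with the genus-zero Euler relation $f(F)=2k(F)-v+e(F)$ that collapses $Z$ onto $Z_T$ for plane all-positive graphs, finishing with $v(G)-e(G)+v(G^{\star})=2$. The only difference is packaging: your ``conversion identity'' with $c=\sqrt q$ and weights $\balpha/\sqrt q$ is precisely the paper's auxiliary polynomial $Z_{R}$ evaluated on the surface $qz^{2}=1$ (i.e.\@ $z=1/\sqrt q$), so you have simply inlined the paper's intermediate proposition rather than stating it separately.
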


To derive the duality relation for the \Tp{}, S.~Chmutov \cite{Chmutov2007aa} used the fact that, for plane graphs, the \BRp{} $R(G;x,y,z)$ is independent of $z$ and reduces to the Tutte polynomial. We would like to maintain such features for the multivariate versions.

Let us recall that
\begin{align}
  Z(G;q,\balpha,c)=&\sum_{F\subseteq G}q^{k(F)+s(F)}\Big(\prod_{e\in E_{+}(F)}\alpha_{e}\prod_{e\in E_{-}(\bar{F})}\alpha_{e}\Big)c^{f(F)}.
\end{align}
If $c=1$ and if all the edges are positive, $Z(G;q,\alpha,c)=Z_{T}(G;q,\balpha)$, but this is clearly not the case for any $c$ when $G$ is plane. We have to define another multivariate version of the \BRp. To this aim, let us come back to the original polynomial:
\begin{subequations}
  \begin{align}
    R(G;x+1,y,z)=&\sum_{F\subseteq G}x^{r(G)-r(F)+s(F)}y^{n(F)-s(F)}z^{k(F)-f(F)+n(F)}\\
    =&x^{-k(G)}(yz)^{-v(G)}\sum_{F\subseteq G}(xyz^{2})^{k(F)+s(F)}(yz)^{e(F)-2s(F)}z^{-f(F)}\\
    =&x^{-k(G)}y^{-v(G)}\sum_{F\subseteq G}(xy)^{k(F)+s(F)}y^{e(F)-2s(F)}z^{2g(F)}.
  \end{align}
\end{subequations}
This shows that $z^{-v(G)}\sum_{F\subseteq G}(xyz^{2})^{k(F)+s(F)}(yz)^{e(F)-2s(F)}z^{-f(F)}$ is independant of $z$ if $G$ is plane. We propose then the following definition:
\begin{defn}[Signed multivariate \BRp{} $2$]
  \begin{align}
    Z_{R}(G;q,\balpha,z)\defi&z^{-v(G)}\sum_{F\subseteq G}(qz^{2})^{k(F)+s(F)}\Big(\prod_{e\in E_{+}(F)}z\alpha_{e}\prod_{e\in E_{-}(\bar{F})}z\alpha_{e}\Big)z^{-f(F)}\label{eq:SMBRPoly2Def}
  \end{align}
\end{defn}
We now give some properties of $Z_{R}$:
\begin{prop}
  Let $G$ be a ribbon graph.
  \begin{enumerate}
    \begin{subequations}
    \item $Z$ and $Z_{R}$ are related by:
      \begin{align}
        Z_{R}(G;q,\balpha,z)=&z^{-v(G)}Z(G;qz^{2},z\balpha,z^{-1}).\label{eq:ZZRRelation}
      \end{align}
    \item If $G$ is plane, $Z_{R}$ is independent of $z$.
    \item For any $G$ the edges of which are all positive, $Z_{R}(G;q,\balpha,1)=Z_{T}(G;q,\balpha)$.
    \item Under a flip of the signs, $Z_{R}$ transforms as follows:
      \begin{align}
        Z_{R}(G_{-\veps};qz^{2},z\balpha,z)=&\Big(\prod_{e\in E(G)}\frac{z\alpha_{e}}{\sqrt{qz^{2}}}\Big)Z_{R}(G_{\veps};qz^{2},zq/\balpha,z).\label{eq:ZRFlipSign}
      \end{align}
    \item Under the partial duality, $Z_{R}$ transforms as follows:
      \begin{align}
        \left.z^{v(G)}Z_{R}(G;qz^{2},z\balpha,z)\rabs_{qz^{2}=1}=&\left.z^{v(G')}Z_{R}(G';qz^{2},z\balpha,z)\rabs_{qz^{2}=1}.\label{eq:ZRGenDual}
      \end{align}
    \item Under the natural duality (duality with respect to $E(G)$ plus a flip of the signs), $Z_{R}$ transforms as follows:
      \begin{align}
        \left.z^{v(G)}Z_{R}(G;qz^{2},z\balpha,z)\rabs_{qz^{2}=1}=&z^{v(G^{\star})}\Big(\prod_{e\in E(G)}z\alpha_{e}\Big)Z_{R}(G^{\star};qz^{2},zq/\balpha,z)\big\vert_{qz^{2}=1}.\label{eq:ZRNatDual}
      \end{align}
    \end{subequations}
  \end{enumerate}
\end{prop}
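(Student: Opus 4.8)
The guiding principle is that every assertion about $Z_{R}$ is a statement about the genuine polynomial $Z$ in disguise, read off through the single relation \eqref{eq:ZZRRelation}. I would therefore dispatch item~1 first, and it is immediate: substituting $q\mapsto qz^{2}$, $\balpha\mapsto z\balpha$ and $c\mapsto z^{-1}$ in the defining sum \eqref{eq:SMBRDef} of $Z$ reproduces, term by term, the defining sum \eqref{eq:SMBRPoly2Def} of $Z_{R}$ up to the global factor $z^{-v(G)}$. Equivalently $z^{v(G)}Z_{R}(G;q,\balpha,z)=Z(G;qz^{2},z\balpha,z^{-1})$, so the surface $qz^{2}=1$ is exactly the locus where the first argument of $Z$ equals $1$; this is why the invariance statements of items~5 and~6 live there. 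This master relation is the backbone of the whole proof.

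For item~2 I would collect the total power of $z$ carried by a single summand of \eqref{eq:SMBRPoly2Def}. The prefactor $z^{-v(G)}$, the factor $(qz^{2})^{k(F)+s(F)}$, the $|E_{+}(F)\cup E_{-}(\bar F)|=e(F)-2s(F)$ copies of $z$ hidden in the weights, and the factor $z^{-f(F)}$ together contribute $z$ to the exponent $-v(G)+2k(F)+e(F)-f(F)$. By Euler's formula for the surface underlying $F$ this exponent equals $2g(F)$ (using $v(F)=v(G)$ since $F$ is spanning), exactly the genus factor appearing in the three-line computation of $R$ preceding the definition of $Z_{R}$. When $G$ is plane every spanning subribbon graph $F$ is plane as well, hence $g(F)=0$ and each summand is $z$-free, so $Z_{R}$ does not depend on $z$. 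Item~3 is then a one-line specialisation: if all edges are positive then $s(F)=0$, $E_{-}(\bar F)=\varnothing$ and $E_{+}(F)=E(F)$, so at $z=1$ the sum \eqref{eq:SMBRPoly2Def} collapses to $\sum_{F}q^{k(F)}\prod_{e\in E(F)}\alpha_{e}=Z_{T}(G;q,\balpha)$ of \eqref{eq:MultiTutteDef}.

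Items~4,~5 and~6 I would obtain by transporting, through the master relation, the three transformation laws already available for $Z$: the change of the sign function (corollary~\ref{cor:ChangeSignFct}), the partial-duality invariance at $q=1$ (theorem~\ref{thm:PartialDuality}), and the natural-duality relation for $Z$ proved above. For item~5 one evaluates the master relation on $qz^{2}=1$, giving $z^{v(G)}Z_{R}=Z(G;1,z\balpha,z^{-1})$; theorem~\ref{thm:PartialDuality} makes this equal to $Z(G';1,z\balpha,z^{-1})=z^{v(G')}Z_{R}(G')$ with $G'=G^{E'}$, which is \eqref{eq:ZRGenDual}. Item~6 combines this with the natural-duality relation $Z(G;1,z\balpha,z^{-1})=\big(\prod_{e}z\alpha_{e}\big)Z(G^{\star};1,(z\balpha)^{-1},z^{-1})$ and re-expresses the right-hand side via the master relation, using $z^{2}=q^{-1}$ on the surface to identify $(z\balpha)^{-1}$ with the effective weights $zq/\balpha$ of $Z_{R}(G^{\star})$, yielding \eqref{eq:ZRNatDual}. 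Item~4, which needs no restriction on $q$, follows identically from corollary~\ref{cor:ChangeSignFct}.

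The only delicate point, and hence where I expect the work to lie, is the bookkeeping of the powers of $z$: each substitution $q\mapsto qz^{2}$, $\balpha\mapsto z\balpha$, $c\mapsto z^{-1}$ must be pushed through the prefactors of corollary~\ref{cor:ChangeSignFct} and of the natural-duality relation, and one must verify that the resulting $z$-weights collapse to the clean factors displayed in \eqref{eq:ZRFlipSign} and \eqref{eq:ZRNatDual}. The decisive cancellations are $z\alpha_{e}/\sqrt{qz^{2}}=\alpha_{e}/\sqrt q$ and the identification $z\cdot(q/\alpha_{e})=zq/\alpha_{e}$, together with $v(F)=v(G)$ and, in items~5--6, the equivalence $qz^{2}=1\Leftrightarrow q=1$ that places the computation precisely on the locus where theorem~\ref{thm:PartialDuality} applies.
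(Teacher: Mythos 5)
Your proposal is correct and follows exactly the route the paper intends: the paper's own proof is the single sentence that the proposition is ``a simple application'' of Theorem \ref{thm:PartialDuality}, Corollary \ref{cor:ChangeSignFct} and the natural-duality relation, transported through the defining relation \eqref{eq:ZZRRelation}. You have merely (and correctly) filled in the details the paper omits, including the genus bookkeeping $-v(G)+2k(F)+e(F)-f(F)=2g(F)$ for item~2 and the identification $(z\balpha)^{-1}=z\,q/\balpha$ on the surface $qz^{2}=1$ for item~6.
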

\begin{proof}
  It is a simple application of the results derived in sections \ref{sec:generalised-duality-1} and \ref{sec:natural-duality-brp}.
\end{proof}

We can now prove that the duality relation for the multivariate \Tp{} (recalled in (\ref{eq:MultiTutteDuality})) is a direct consequence of the partial duality for the multivariate signed \BRp{} $Z_{R}$.

Let $G$ be a plane graph all the edges of which are positive. Then, for all $z$ we have
\begin{align}
  Z_{T}(G;q,\balpha)=&Z_{R}(G;qz^{2},z\balpha,z)=\left.Z_{R}(G;qz'^{2},z'\balpha,z')\rabs_{qz'^{2}=1}\nonumber\\
  =&z'^{-v(G)+v(G^{\star})}\Big(\prod_{e\in E(G)}z'\alpha_{e}\Big)Z_{R}(G^{\star};qz'^{2},z'q/\balpha,z')\big\vert_{qz'^{2}=1}.
    \intertext{Using $v(G)-e(G)+v(G^{\star})=2$ ($G$ being connected and plane), we get}
    Z_{T}(G;q,\balpha)=&(z'^{2})^{v(G^{\star})-1}\Big(\prod_{e\in E(G)}\alpha_{e}\Big)Z_{R}(G^{\star};qz'^{2},z'q/\balpha,z')\big\vert_{qz'^{2}=1}\nonumber\\
    =&q^{1-v(G^{\star})}\Big(\prod_{e\in E(G)}\alpha_{e}\Big)Z_{R}(G^{\star};qz^{2},zq/\balpha,z)\nonumber\\
    =&q^{1-v(G^{\star})}\Big(\prod_{e\in E(G)}\alpha_{e}\Big)Z_{T}(G^{\star};q,q/\balpha).
\end{align}

{\small
\bibliographystyle{fababbrvnat}
\bibliography{biblio-articles,biblio-books}
}
\end{document}